\newcommand{\pai}{\left(}
\newcommand{\pad}{\right)}
\tikzset{
  ncone/.pic={
	\draw (0,0)--(0,0.2);
  }
}
\tikzset{
  nctwo/.pic={
    \draw (0,0)--(0,0.2);
	\draw (0.1,0)--(0.1,0.2);
  }
}
\tikzset{
  nctwoW/.pic={
    \draw (0,0.2)--(0,0)--(0.1,0)--(0.1,0.2);
  }
}
\tikzset{
  nctwoWW/.pic={
    \draw (0,0.2)--(0,0)--(0.2,0)--(0.2,0.2);
  }
}
\tikzset{
  ncthreeWW/.pic={
    \draw (0,0.2)--(0,0)--(0.3,0)--(0.3,0.2);
	\draw (0.2,0)--(0.2,0.2);
  }
}
\tikzset{
  ncthree/.pic={
    \draw (0,0)--(0,0.2);
	\draw (0.1,0)--(0.1,0.2);
	\draw (0.2,0)--(0.2,0.2);
  }
}
\tikzset{
  ncthreeW/.pic={
    \draw (0,0.2)--(0,0)--(0.2,0)--(0.2,0.2);
	\draw (0.1,0)--(0.1,0.2);
  }
}
\tikzset{
  ncfour/.pic={
    \draw (0,0)--(0,0.2);
	\draw (0.1,0)--(0.1,0.2);
	\draw (0.2,0)--(0.2,0.2);
	\draw (0.3,0)--(0.3,0.2);
  }
}
\tikzset{
  ncfive/.pic={
    \draw (0,0)--(0,0.2);
	\draw (0.1,0)--(0.1,0.2);
	\draw (0.2,0)--(0.2,0.2);
	\draw (0.3,0)--(0.3,0.2);
	\draw (0.4,0)--(0.4,0.2);
  }
}
\tikzset{
  ncfourW/.pic={
    \draw (0,0.2)--(0,0)--(0.3,0)--(0.3,0.2);
	\draw (0.1,0)--(0.1,0.2);
	\draw (0.2,0)--(0.2,0.2);
  }
}
\tikzset{
  ncfiveW/.pic={
    \draw (0,0.2)--(0,0)--(0.4,0)--(0.4,0.2);
	\draw (0.1,0)--(0.1,0.2);
	\draw (0.2,0)--(0.2,0.2);
	\draw (0.3,0)--(0.3,0.2);
  }
}
\tikzset{
  nconeinsidetwoWW/.pic={
    \path (0,0) pic {nctwoWW}; \path (0.1,0.1) pic {ncone};
  }
}
\tikzset{
  nconeinsidethreeWW/.pic={
    \path (0,0) pic {ncthreeWW}; \path (0.1,0.1) pic {ncone};
  }
}
\tikzset{
  nconeinsidethreerightWW/.pic={
    \draw (0,0.2)--(0,0)--(0.3,0)--(0.3,0.2);
    \draw (0.1,0)--(0.1,0.2); \draw (0.2,0.1)--(0.2,0.3);
  }
}
\tikzset{
  nconeinsidethreeleftWW/.pic={
    \draw (0,0.2)--(0,0)--(0.3,0)--(0.3,0.2);
    \draw (0.2,0)--(0.2,0.2); \draw (0.1,0.1)--(0.1,0.3);
  }
}
\tikzset{
  nctwoWWW/.pic={
    \draw (0,0.2)--(0,0)--(0.3,0)--(0.3,0.2);
  }
}
\tikzset{
  nconeoneinsidetwoWW/.pic={
	\path (0,0) pic {ncone};
    \path (0.1,0) pic {nctwoWW}; 
	\path (0.2,0.1) pic {ncone};
  }
}
\theoremstyle{definition}
\newtheorem{defi}{\indent Definition}[section]
\newtheorem{rem}[defi]{\indent Remark}
\theoremstyle{theorem}
\newtheorem{lemma}[defi]{\indent Lemma}
\newtheorem{cor}[defi]{\indent Corollary}
\newtheorem{theo}[defi]{\indent Theorem}
\newtheorem{prop}[defi]{\indent Proposition}
\newcommand{\frontstick}{\,\raisebox{-1pt}{\begin{tikzpicture}
\draw [line width=1pt,] (0,0)--(0,0.25);
\end{tikzpicture}}\kern+2pt}
\newcommand{\NC}{\operatorname{NC}}
\definecolor{red}{rgb}{1.,0.,0.}
\definecolor{green}{rgb}{0.,1.,0.}
\definecolor{blue}{rgb}{0.,0.,1.}
\definecolor{orange}{rgb}{1.,0.8431372549019608,0.}
\title{Cumulant-cumulant relations in free probability theory\\ from Magnus' expansion}
\newcounter{PartitionDepth}
\newcounter{PartitionLength}
\begin{document}

\author{
A.~Celestino$\!\!\phantom{i}^{\ast}$\!\! , 
K.~Ebrahimi-Fard\footnote{Department of Mathematical Sciences, Norwegian University of Science and Technology (NTNU), 7491 Trondheim, Norway. \texttt{adrian.celestino@ntnu.no}, \texttt{kurusch.ebrahimi-fard@ntnu.no}}\! , 
F.~Patras\footnote{Universit\'e de Nice, Laboratoire J.-A.~Dieudonn\'e, UMR 7351, CNRS, Parc Valrose, 06108 Nice Cedex 02, France. \texttt{patras@math.unice.fr}}\! , 
D.~Perales Anaya\footnote{Department of Pure Mathematics, University of Waterloo, Ontario, Canada. \texttt{dperales@uwaterloo.ca} }}

\maketitle

\begin{abstract}
Relations between moments and cumulants play a central role in both classical and non-commutative probability theory. The latter allows for several distinct families of cumulants corresponding to different types of independences: free, Boolean and monotone. Relations among those cumulants have been studied recently. In this work we focus on the problem of expressing with a closed formula multivariate monotone cumulants in terms of free and Boolean cumulants. In the process we introduce various constructions and statistics on non-crossing partitions. Our approach is based on a pre-Lie algebra structure on cumulant functionals. Relations among cumulants are described in terms of the pre-Lie Magnus expansion combined with results on the continuous Baker--Campbell--Hausdorff formula due to A.~Murua.
\end{abstract}


\begin{quote}
{\footnotesize{\bf Keywords:} monotone cumulants; free cumulants; Boolean cumulants; irreducible non-crossing partitions; quasi-monotone partitions; pre-Lie algebra; pre-Lie Magnus expansion; rooted trees.}\\
{\footnotesize{\bf MSC Classification}: 16T30; 17A30; 46L53; 46L54}
\end{quote}

  
\tableofcontents


\section{Introduction}
\label{sec:intro}

Relations between moments and cumulants have been intensively studied in both classical and non-commutative probability theory. The latter, contrary to the classical theory, allows for several distinct families of cumulants corresponding to different types of independences: free, Boolean and monotone. It is natural to compare those cumulants. Arizmendi et al.~\cite{arizmendi_15} have studied in detail relations among classical, free, Boolean and monotone cumulants using the common approach based on M\"obius inversion in the various corresponding lattices of set partitions and other combinatorial and algebraic techniques.

In a series of recent papers, two of us developed an alternative approach based on non-commutative shuffle algebra combined with group and Lie algebra theory \cite{ebrahimipatras_15,ebrahimipatras_16,ebrahimipatras_18,ebrahimipatras_19,ebrahimipatras_20}. More precisely, these works explore the group of characters and its corresponding Lie algebra of infinitesimal characters on a non-commutative combinatorial Hopf algebra $H$ constructed from the data provided by a non-commutative probability space $(A,\varphi)$. Concretely, $H$ is defined to be the double tensor algebra over the latter. In this context, the linear functional $\varphi\colon A \to \mathbb{K}$, which defines the moments of non-commutative random variables, is interpreted as an algebra map from $H$ to the ground field $\mathbb{K}$ (usually the complex numbers). The various families of cumulants (free, Boolean and monotone), usually seen as multilinear functions over $(A,\varphi)$, enter naturally this picture as infinitesimal characters, i.e., linear forms on $H$ that vanish on the algebra unit as well as on products of non-unital elements in the augmentation ideal of $H$. This approach leads in particular to the definition of a pre-Lie algebra structure on the space of infinitesimal characters, which results naturally from the formalism of shuffle algebras. 

\smallskip

The pivotal aim of the work at hand is to employ the aforementioned pre-Lie algebra structure on the level of cumulants, now seen as multilinear maps on the non-commutative probability space. This approach is self-contained and does not require using shuffle or Hopf algebras. In fact, it will be shown that the notion of pre-Lie algebra provides an adequate setting for a concise theoretical presentation of multivariate cumulant-cumulant relations. In particular, it is tailored to describe precisely multivariate monotone-free and monotone-Boolean cumulant-cumulant relations in terms of the pre-Lie Magnus map and its inverse, the so-called pre-Lie exponential. Both of them are familiar in various domains of applied mathematics such as numerical analysis of differential equations and geometric control theory \cite{AS_04,blanes_09,iserles_00}. This is underlined by the fact that we use combinatorial constructions and formulas introduced by A.~Murua \cite{murua_07} in the study of the expansion of the continuous Baker--Campbell--Hausdorff formula in a Hall basis. In the process, still inspired by Murua's seminal work, we introduce operations on non-crossing partitions together with the notion of quasi-monotone partitions. These turn out to be related to other partitions previously considered in the context of free probability, see Remark \ref{rem.quasi.colored} below. As we are interested in obtaining explicit formulas, we enhance this new point of view by focusing on the combinatorics of non-planar rooted forests naturally associated to non-crossing partitions. 

\smallskip

The paper is organised as follows. Section \ref{sec:ncpart} briefly recalls the basics on non-crossing partitions including the link to non-planar rooted trees encoding the hierarchy of nested blocks in a non-crossing partition as well as tree factorials. In Section \ref{sec:preLie} we give a self-contained presentation of the pre-Lie structure on the space of cumulants -- the space $\mathfrak g$ of multilinear maps taking values in the complex numbers, defined on a non-commutative probability space. We recall the theoretical results on the pre-Lie Magnus map and its inverse relevant to relations between cumulants.
In Section \ref{sec:Fla1}, we compute explicitly iterated pre-Lie products in $\mathfrak g$ and, as a first application, show how the formula can be used to recover the known expression of free and Boolean cumulants in terms of monotone ones \cite{arizmendi_15}. Notice that the proof is different from the one we used in \cite{ebrahimipatras_18}. In Section \ref{sec:Fla2}, we use the same strategy to handle the more involved computation of multivariate monotone cumulants in terms of free (or Boolean) cumulants. It is based on a recursion that has a similar structure than the one defining the coefficients of the computation of the continuous Baker--Campbell--Hausdorff (BCH) coefficients $\omega$ in a Hall basis described in Murua's work \cite{murua_07}. This should not come as a surprise since the BCH coefficients are known to be closely related to the Magnus formula. However, their appearance in the context of cumulant-cumulant relations in non-commutative probability theory seems to indicate new perspectives. In the process of adapting Murua's constructions to the context of non-crossing partitions, we introduce the notion of quasi-monotone partitions. It allows to define a statistics on non-crossing partitions that leads to the computation of the coefficients of the multivariate monotone-free cumulant-cumulant expansion.

\medskip

In the following the pair $(A,\varphi)$ denotes a non-commutative probability space. Its unital linear map $\varphi\colon A \to \mathbb{K}$ sends elements from an arbitrary unital associative algebra of random variables into the ground field of characteristic zero. We remark that in the context of the present paper any additional assumptions such as positivity on the map $\varphi$ are not necessary. We assume some familiarity with the basic combinatorial notions and constructions related to free probability and non-crossing partitions. On these topics, the reader is referred to \cite{nicaspeicher_06}.

Iterated left- and right-multiplication operators are denoted as follows. Given any bilinear product, denoted by $\bullet$, we define the left/right multiplication operators $L^{m}_{\alpha\scriptscriptstyle{\bullet} }(\beta):=L^{m-1}_{\alpha\scriptscriptstyle{\bullet} }(\alpha {\bullet} \beta)$, $L^{0}_{\alpha\scriptscriptstyle{\bullet} }(\beta)=\beta$ respectively $R^{m}_{\scriptscriptstyle{\bullet} \alpha}(\beta):=R^{m-1}_{\scriptscriptstyle{\bullet} \alpha}(\beta {\bullet} \alpha)$, $R^{0}_{\scriptscriptstyle{\bullet} \alpha}(\beta)=\beta$. 

Recall at last the classical generating series for the Bernoulli numbers $B_n$ 
$$
	\frac{z}{\exp(z)-1} = 1 + \sum_{n>0} \frac{B_n}{n!} z^{n}
$$	
as well as its inverse
$$
	\frac{\exp(z)-1}{z} = 1 + \sum_{n>0} \frac{1}{(n+1)!} z^{n}.
$$

\vspace{0.5cm}


{\bf{Acknowledgements}}: The fourth author would like to thank CONACyT (Mexico) for its support via the scholarship 714236.  This work was partially supported by the project ``Pure Mathematics in Norway'', funded by Trond Mohn Foundation and Troms{\o} Research Foundation. 


\section{Non-crossing partitions and rooted trees}
\label{sec:ncpart}

We denote by $\mathrm{NC}(n)$ the lattice of non-crossing partitions of the set $[n]=\{1,\ldots,n\}$. Sometimes the obvious extension of the notion of non-crossing partition of $[n]$ to other finite totally ordered sets $X$ is used. The corresponding set of non-crossing partitions will then be written $\mathrm{NC}(X)$. 

The number of blocks of a non-crossing partition, $\pi = \{\pi_1,\ldots,\pi_m\} \in \mathrm{NC}(n)$, is denoted by $|\pi|=m$. A non-crossing partition in $\mathrm{NC}(n)$ is irreducible if and only if both $1$ and $n$ are in the same block. The set of irreducible non-crossing partitions in $\mathrm{NC}(n)$ is denoted by $\mathrm{NC}^{\scriptscriptstyle\mathrm{irr}}(n)$. The sets of non-crossing and irreducible non-crossing partitions with $k$ blocks are denoted $\mathrm{NC}_k(n)$ respectively $\mathrm{NC}_k^{\scriptscriptstyle\mathrm{irr}}(n)$. Interval partitions are a subset of $\mathrm{NC}(n)$ and form the lattice $\mathrm{I}(n)$. 

There is a natural partial order on the lattice $\mathrm{NC}(n)$ called reversed refinement order $\leq$. For $\pi,\sigma\in \mathrm{NC}(n)$, we write ``$\pi \leq \sigma$'' if every block of $\sigma$ is a union of blocks of $\pi$. The maximal element of $\mathrm{NC}(n)$ with this order is $1_n:=\{\{1,\dots,n\}\}$ (the partition of $[n]$ with only one block), and the minimal element is $0_n:=\{\{1\},\{2\},\dots,\{n\}\}$ (the partition of $[n]$ with $n$ blocks). This order is at play when we refer to the lattice structure on $\mathrm{NC}(n)$. 

\begin{rem}\label{rem:minmax}
For $\pi,\sigma \in \mathrm{NC}(n)$, we write ``$\pi \ll \sigma$'' for the so-called {min-max order} on $\mathrm{NC}(n)$, meaning that $\pi \leq \sigma$ and that for every block $\sigma_i$ of $\sigma$ there exists a block $\pi_j$ of $\pi$ such that $\min(\sigma_i),\max(\sigma_i) \in \pi_j$. We refer the reader to \cite{BN_08} for a broader description of this partial order and its deeper meaning in the context of moments and cumulants. 
\end{rem}

Recall that a rooted tree $t$ is a connected and simply connected graph with a distinguished vertex called the root. The edges are oriented towards the root. Vertices have exactly one outgoing edge and an arbitrary number of incoming ones, except for the root which is the only vertex with no outgoing edge. The leaves are the vertices without incoming edges. A forest is a finite set of rooted trees.

Let $\mathcal{T}$ denote the set of non-planar rooted trees 
\[
	\mathcal{T} = \left\{\begin{array}{c}
		\Forest{[]},
		\Forest{[[]]},
		\Forest{[[[]]]},
		\Forest{[[][]]},
		\Forest{[[[[]]]]},
		\Forest{[[][][]]},
		\Forest{[[[]][]]},
		\Forest{[[[][]]]},
		\ldots
 		\end{array}
		\right\}.
\]

For later use we denote by $\mathcal{T}^\ell$ the set of so-called ladder trees 
$$
	\ell_1= \scalebox{0.8}{\Forest{[]}}
	\quad
	\ell_2= \scalebox{0.8}{\Forest{[[]]}}
	\quad
	\ell_3= \scalebox{0.8}{\Forest{[[[]]]}}
	\quad
	\ell_4= \scalebox{0.8}{\Forest{[[[[]]]]}}
	\quad
	\ell_5= \scalebox{0.8}{\Forest{[[[[[]]]]]}}
	\quad
	\cdots.
$$

The degree, $|t|$, of a rooted tree $t \in \mathcal{T}$ is defined by its number of vertices and $\mathcal{T}_n$ contains all trees of degree $n$. The empty tree, $\emptyset$, has degree $|\emptyset|=0$. 
Recall also that if a rooted tree $t$ is obtained by attaching trees $t_1,\dots,t_n$ on a new common root, the tree factorial of $t$ is such that $t! := |t|t_1 ! \dots t_n !$. Together with $\bullet !:=1$, this identity defines inductively tree factorials.

\begin{lemma}[\cite{kreimer_00}] \label{lem:kreimer}
Let $t \in \mathcal{T}$ be a rooted tree. By $\mathcal{L}_-(t) \subset \mathcal{T}$ we denote the multiset of rooted trees that result from eliminating a leaf together with its outgoing edge from $t$. The cardinality of $\mathcal{L}_-(t)$ equals the number of leaves of $t$. Then
\allowdisplaybreaks
\begin{equation}
\label{Kreimer}
	\frac{|t|}{t!} = \sum_{t' \in \mathcal{L}_-(t)} \frac{1}{t'!}.
\end{equation}  
\end{lemma}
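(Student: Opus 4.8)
The plan is to establish \eqref{Kreimer} by induction on the number of vertices $|t|$. In the base case $|t|=1$, i.e.\ $t=\bullet$, the left-hand side is $|t|/t!=1$, while the unique leaf is the root itself, whose removal yields the empty tree; thus the right-hand side is $1/\emptyset!=1$ with the standard convention $\emptyset!=1$, and the two sides agree. For the inductive step write $t$ as the tree obtained by grafting branches $t_1,\dots,t_n$ ($n\ge 1$) onto a fresh root, so that $|t|=1+|t_1|+\cdots+|t_n|$ and, by the defining recursion of the tree factorial, $t!=|t|\,t_1!\cdots t_n!$.

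The crucial combinatorial observation is that deleting a leaf of $t$ is the same datum as choosing an index $i$ and deleting a leaf of the branch $t_i$. Hence, as a multiset, $\mathcal{L}_-(t)$ is the union over $i\in\{1,\dots,n\}$ of the trees $s_i(t_i')$ obtained from $t$ by replacing the branch $t_i$ with $t_i'\in\mathcal{L}_-(t_i)$ — where, when $t_i=\bullet$, the value $t_i'=\emptyset$ corresponds to erasing the $i$-th branch of $t$ altogether. In every case $|s_i(t_i')|=|t|-1$, and unwinding the tree-factorial recursion gives
\[
	s_i(t_i')!\;=\;(|t|-1)\,\frac{t_1!\cdots t_n!}{t_i!}\,t_i'!\,,
\]
a formula that remains correct when $t_i'=\emptyset$ thanks to $\emptyset!=1$ and $\bullet!=1$.

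Summing reciprocals and then invoking the induction hypothesis on each branch $t_i$ — using $\sum_{t_i'\in\mathcal{L}_-(t_i)}1/t_i'!=|t_i|/t_i!$, which for a one-vertex branch $t_i=\bullet$ reads $1/\emptyset!=1=|\bullet|/\bullet!$ — one obtains
\[
	\sum_{t'\in\mathcal{L}_-(t)}\frac{1}{t'!}
	=\frac{1}{|t|-1}\sum_{i=1}^{n}\frac{t_i!}{t_1!\cdots t_n!}\sum_{t_i'\in\mathcal{L}_-(t_i)}\frac{1}{t_i'!}
	=\frac{1}{(|t|-1)\,t_1!\cdots t_n!}\sum_{i=1}^{n}|t_i|\,.
\]
Since $|t_1|+\cdots+|t_n|=|t|-1$, the right-hand side collapses to $1/(t_1!\cdots t_n!)=|t|/t!$, which is exactly \eqref{Kreimer}.

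The computation is a one-line manipulation once the setup is in place; the only point requiring genuine care is the bookkeeping of the second paragraph — verifying that $\mathcal{L}_-(t)$ is precisely the multiset union of the $s_i(\mathcal{L}_-(t_i))$, and checking that the degenerate branches $t_i=\bullet$ (for which the branch-root \emph{is} a leaf) are absorbed consistently via $\emptyset!=1$. Since the identity is classical and due to Kreimer, one could alternatively simply refer to \cite{kreimer_00}.
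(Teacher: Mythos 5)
Your induction is correct. Note that the paper itself gives no argument for this lemma: it is imported from \cite{kreimer_00} and merely illustrated by the numerical examples that follow, so there is no internal proof to compare against, and your write-up supplies a self-contained elementary justification that the paper leaves to the reference. The key bookkeeping step is sound: writing $t$ as branches $t_1,\dots,t_n$ grafted on a root (the root is not a leaf since $n\ge 1$), the multiset $\mathcal{L}_-(t)$ is indeed the disjoint union over $i$ of the trees obtained by replacing $t_i$ with $t_i'\in\mathcal{L}_-(t_i)$, the factorial identity $s_i(t_i')!=(|t|-1)\,t_i'!\,\prod_{j\ne i}t_j!$ follows from the defining recursion, and with $|t_1|+\cdots+|t_n|=|t|-1$ the computation closes (and $|t|-1\ge 1$, so no division by zero). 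The one point worth making explicit is the convention on the empty tree: the paper sets $\bullet!:=1$ and $|\emptyset|=0$ but never assigns $\emptyset!$, while your base case $t=\bullet$ and the degenerate branches $t_i=\bullet$ (where the whole branch is erased) both rely on $\emptyset!=1$ and on admitting $\emptyset\in\mathcal{L}_-(\bullet)$; you do state this convention, and it is consistent with the recursion, but since the lemma as stated has $\mathcal{L}_-(t)\subset\mathcal{T}$ with $\mathcal{T}$ listed as the nonempty trees, it deserves the explicit sentence you give it (alternatively one can run the induction with base case $|t|=2$ and treat single-vertex branches directly, avoiding the empty tree altogether). Referring to \cite{kreimer_00} would of course also suffice, as the paper does.
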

$$
	\frac{2}{\scalebox{0.4}{\Forest{[[]]}}!} 
	=\frac{|\scalebox{0.4}{\Forest{[[][]]}}|}{\scalebox{0.4}{\Forest{[[][]]}}!},
	\quad
	\frac{1}{\scalebox{0.4}{\Forest{[[][]]}}!} + \frac{1}{\scalebox{0.4}{\Forest{[[[]]]}}!}
	= \frac{1}{3} + \frac{1}{3!} 
	=\frac{1}{2} 
	= \frac{|\scalebox{0.4}{\Forest{[[[]][]]}}|}{\scalebox{0.4}{\Forest{[[[]][]]}}!},
	\quad
	\frac{2}{\scalebox{0.4}{\Forest{[[[]]]}}!}
	= \frac{1}{3} 
	= \frac{|\scalebox{0.4}{\Forest{[[[][]]]}}|}{\scalebox{0.4}{\Forest{[[[][]]]}}!}, 
	\quad
	\frac{2}{\scalebox{0.4}{\Forest{[[[]][]]}}!}
	+\frac{1}{\scalebox{0.4}{\Forest{[[][][]]}}!}
	= \frac{1}{4} + \frac{1}{4} 
	= \frac{|\scalebox{0.4}{\Forest{[[[]][][]]}}|}{\scalebox{0.4}{\Forest{[[[]][][]]}}!}, 
$$ 
 
Recall that the blocks of a non-crossing partition $\pi=\{\pi_1,\dots,\pi_k\}$ are naturally ordered: $\pi_j<\pi_i$ if and only if there exist elements $x,y$ in $\pi_j$ such that $x<z<y$ for any $z$ in $\pi_i$.
A block $\pi_i$  is outer if it is minimal for this order, that is if there do not exist elements $x,y$ in another block $\pi_j$ such that $x<z<y$ for any $z$ in $\pi_i$.

To any irreducible non-crossing partition ${\pi=\{\pi_1,\ldots,\pi_k\} \in \mathrm{NC}^{\scriptscriptstyle\mathrm{irr}}(n)}$ one can naturally associate a rooted tree $t(\pi)$ encoding the hierarchy of the nested blocks of $\pi$. We refer to  \cite{arizmendi_15} for more details. The root is associated with the outer block, that contains $1,n \in [n]$. 

To a non-crossing partition which is not irreducible, one associates similarly a forest of trees, each tree encoding the hierarchy of one of its irreducible components. To define the latter, consider the block $B_1$ of $\pi$ that contains $1=:i_{1,min}$ and write $i_{1,max}$ its maximal element. The set of blocks $\rho_1:=\{\pi_i|\pi_i\geq B_1\}$ forms an irreducible non-crossing partition of the set $\{i_{1,\min},i_{1,\min}+1,\dots,i_{1,\max}\}$. Consider then the block $B_2$ with minimal element $i_{2,\min}:=i_{1,\max}+1$ and write $i_{2,\max}$ for its maximal element. The set of blocks $\rho_2:=\{\pi_i,\pi_i\geq B_2\}$ forms an irreducible non-crossing partition of $\{i_{2,\min},\dots,i_{2,\max}\}$. Iterating this construction, $\pi$ decomposes uniquely, $\pi=\rho_1\cup \rho_2\cup \cdots \cup \rho_l$, as a union of irreducible non-crossing partitions. The partitions $\rho_1,\dots,\rho_l$ are the irreducible components of $\pi$.
The latter are in bijection with the outer blocks which, in turn, are associated to the roots of the trees in the forest. Note that for $\pi \in \mathrm{NC}(n)$, $|\pi| = |t(\pi)|$. For instance, consider the following two non-crossing partitions and the corresponding rooted tree, respectively forest, encoding the nesting structures 
$$
\begin{array}{cc}
\begin{tikzpicture}[thick,font=\small]
     \path 	(0,0) 		node (a) {1}
           	(0.5,0) 	node (b) {2}
           	(1,0) 		node (c) {\phantom{1}}
           	(1.5,0) 	node (d) {4}
           	(2,0) 		node (e) {\phantom{1}}
           	(2.5,0) 	node (f)  {6}
           	(3,0) 		node (g) {\phantom{1}}
           	(3.5,0) 	node (h) {\phantom{1}}
		(4,0) 		node (i) {\phantom{1}}
		(4.5,0) 	node (j) {\phantom{1}};
     \draw (a) -- +(0,0.75) -| (j);
     \draw (b) -- +(0,0.60) -| (b);
     \draw (c) -- +(0,0.75) -| (c);
     \draw (d) -- +(0,0.60) -| (e);
     \draw (e) -- +(0,0.60) -| (h);
     \draw (f) -- +(0,0.50) -| (g);
     \draw (i) -- +(0,0.75) -| (j);
   \end{tikzpicture} 	& \\[-1.1cm]
				&    \Forest{[1[2][4[6]]]}
\end{array}    
\qquad
\begin{array}{cc}
\begin{tikzpicture}[thick,font=\small]
     \path 	(0,0) 		node (a) {1}
           	(0.5,0) 	node (b) {2}
           	(1,0) 		node (c) {\phantom{1}}
           	(1.5,0) 	node (d) {4}
           	(2,0) 		node (e) {\phantom{1}}
           	(2.5,0) 	node (f)  {6}
           	(3,0) 		node (g) {7}
           	(3.5,0) 	node (h) {\phantom{1}};
     \draw (a) -- +(0,0.75) -| (c);
     \draw (b) -- +(0,0.60) -| (b);
     \draw (c) -- +(0,0.75) -| (c);
     \draw (d) -- +(0,0.75) -| (e);
     \draw (e) -- +(0,0.75) -| (h);
     \draw (f)  -- +(0,0.60) -|(f);
     \draw (g) -- +(0,0.60)-|(g); 
   \end{tikzpicture} 	& \\[-1.1cm]
				&    \Forest{[1[2]]}\Forest{[4[6][7]]}.\\
				&\\
				&
\end{array}  
$$
Here, for notational clarity, the nodes of the trees are decorated by the minimal elements of the corresponding blocks in the partition.

Monotone partitions are non-crossing partitions equipped with a total order on their blocks refining the natural partial order of the blocks. We refer the reader to \cite{arizmendi_15} for more details on monotone partitions. Choosing such a total order amounts to reindexing the blocks in such a way that $i<j$ implies $\pi_i \leq \pi _j$. Let us write $m(\pi)$ for the number of these total orders, that is, the number of monotone partitions associated to a given non-crossing partition, then
\begin{equation}
\label{monolab}
	m(\pi)=\frac{|t(\pi)|!}{t(\pi)!},
\end{equation}
where we recall that $|t(\pi)|=|\pi|$ stands for the number of blocks of $\pi$. Following \cite{hasebesaigo_11} we introduce a random process that generates the set denoted $\mathcal{M}^{{\mathrm{irr}}}_k(n)$ of all irreducible monotone partitions (monotone partitions of irreducible non-crossing partitions) of $[n]$ with $k$ blocks. In fact, this construction is adapted from standard arguments on the construction and enumeration of non-crossing partitions. See, e.g., \cite{nicaspeicher_06}. We call interval in a finite subset $S$ of the set of integers any sequence of successive elements of $S$ for the usual order. We also call interval the associated subset of $S$. For example, $\{4,8\}$ is an interval in the set  $\{2,3,4,8,10\}$. 

\begin{itemize}
\item To initiate the recursion, choose an interval $\pi_k$ in $[n]$ of length less or equal to $n-k$ such that $1,n\notin \pi_k$. Call $S_2$ the complementary subset of $\pi_k$ in $[n]$ and denote $n_2$  its cardinality (so that $n_2\geq k$). 

\item For $i=2,\ldots, k-1$, choose an interval $\pi_{k-i+1}$ in $S_i$ of length less that $n_i-k+i-1$ such that $1,n\notin \pi_{k-i+1}$. Call $S_{i+1}$ the complementary subset of $\pi_{k-i+1}$ in $S_i$ and denote $n_{i+1}$ its cardinality (so that $n_{i+1}\geq k-i+1$). 

\item Set $\pi_1:=S_k$ (so that $1,n\in \pi_1$).
\end{itemize}

Then the ordered sequence $\pi_1,\dots,\pi_k$ is an irreducible monotone partition of $[n]$. The process creates (randomly) all irreducible monotone partitions of $[n]$ with $k$ blocks. As a corollary of this construction it follows that non trivial irreducible monotone partitions of $[n]$ are in bijection with pairs $(\alpha,\kappa)$ where $\alpha$ is an interval of $[n]$ that does not contain $\{1,n\}$ (as $\pi_k$ above) and $\kappa$ runs over all irreducible monotone partitions of $[n]\backslash\alpha$. This corollary is all we will need and follows easily from the definitions, but we find the description of the generating process of irreducible monotone partitions illuminating in view of our forthcoming developments.

\section{Pre-Lie algebra and cumulants}
\label{sec:preLie}

In this section we outline the mathematical setting in which we will express monotone cumulants in terms of free and Boolean cumulants and vice versa. 

\smallskip

Let $(A,\varphi)$ be a non-commutative probability space \cite{nicaspeicher_06}. The starting point is the non-unital tensor algebra $T_+(A) := \bigoplus_{n>0} A^{\otimes n}$. Elements $w \in T_+(A)$ are denoted as words, i.e., $w=a_1 \cdots a_k := a_1 \otimes \cdots \otimes a_k \in A^{\otimes k}$. The length of a word is defined by its number of letters and will be denoted $\mathrm{deg}(w)=k$. If $\pi=\{i_1,\dots,i_m\}$ is a subset of $[k]=\{1,\dots,k\}$, $a_\pi$ stands for the word $a_{i_1}\cdots a_{i_m} \in T_+(A)$. The following notation is put in place for linear forms $\alpha \in \mathfrak{g}:=T_+(A)^\star$\begin{equation}
    \label{trickynotation}
	\alpha_{\pi}(w):=\prod_{\pi_i \in \pi}\alpha(w_{\pi_i}), 
\end{equation}
where $\pi = \{\pi_1,\ldots,\pi_k\} \in \mathrm{NC}(n)$. The functional $\varphi: A \to \mathbb{K}$ is extended linearly to $T_+(A)$ giving the $n$th multivariate moment 
$$
	\phi(w):=\varphi(a_1 \cdot_{\!\!\scriptscriptstyle{A}} \cdots \cdot_{\!\!\scriptscriptstyle{A}} a_n)
$$
for $w=a_1\cdots a_n\in T_+(A)$, where $\cdot_{\!\!\scriptscriptstyle{A}}$ denotes the product in $A$ that we distinguish notationally from the concatenation product of words in the tensor algebra. 

Free, Boolean and monotone cumulants form respectively the families of multilinear functionals $\{r_n \colon A^{\otimes n}\to \mathbb{K}\}_{n\geq1}$, $\{ b_n :A^{\otimes n}\to \mathbb{K}\}_{n \ge 1}$, $\{h_n \colon A^{\otimes n}\to \mathbb{K} \}_{n \ge 1}$. These families can be viewed equivalently as linear forms $\kappa,\beta,\rho: T_+(A)\to \mathbb{K}$, i.e., $\kappa(a_1\cdots a_n):=r_n(a_1,\ldots, a_n)$ and analogously for the Boolean and monotone cumulants. Free \cite{nicaspeicher_06}, Boolean \cite{speicher_97c} and monotone \cite{hasebesaigo_11} cumulants are defined in terms of the corresponding moment-cumulant relations
\begin{align}
	\phi(a_1 \cdots a_n)
	&= \sum_{\pi\in \NC(n)} \frac{1}{t(\pi)!} 
	\prod_{\pi_i \in \pi} \rho(a_{\pi_i}) \label{mono}\\
	&= \sum_{\pi\in \NC(n)} \prod_{\pi_i \in \pi}\kappa(a_{\pi_i})\label{free}\\
	&= \sum_{\pi\in \mathrm{I}(n)} \prod_{\pi_i \in \pi}\beta(a_{\pi_i})\label{boolean}.
\end{align}

The following proposition is central to the approach used in this paper. It defines a (left) pre-Lie algebra structure on cumulants, seen as linear forms over $T_+(A)$. 
 
\begin{prop}
\label{prop:bridges}
Let $\alpha,\beta \in \mathfrak{g}$ and $w \in T_+(A)$, then the product $\rhd \colon \mathfrak{g} \otimes \mathfrak{g} \to \mathfrak{g}$ defined by
\begin{equation}
	\alpha \rhd \beta (w)
	:= - \sum_{w_1w_2w_3 = w \atop \mathrm{deg}(w_i)>0}
	 \beta(w_1w_3)\alpha(w_2) 	\label{bridge1}
\end{equation}	
satisfies the left pre-Lie identity
\begin{equation}
\label{preLierelation}
	\alpha \rhd (\beta \rhd \gamma) - (\alpha \rhd \beta) \rhd \gamma
	= \beta \rhd (\alpha \rhd \gamma) - (\beta \rhd \alpha) \rhd \gamma.
\end{equation}	
\end{prop}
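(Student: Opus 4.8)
The plan is to compute the associator $\alpha \rhd (\beta \rhd \gamma) - (\alpha \rhd \beta) \rhd \gamma$ directly from \eqref{bridge1} and check that it is symmetric under the exchange $\alpha \leftrightarrow \beta$; since the right-hand side of \eqref{preLierelation} is obtained from the left-hand side by precisely this exchange, such symmetry is exactly the left pre-Lie identity. A preliminary observation keeps the signs under control: each application of $\rhd$ contributes one minus sign, so both $\alpha \rhd (\beta \rhd \gamma)(w)$ and $(\alpha \rhd \beta) \rhd \gamma(w)$ expand with an overall plus sign, and no further sign bookkeeping is needed.

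First I would unfold $(\alpha \rhd \beta) \rhd \gamma$. Applying \eqref{bridge1} twice writes $(\alpha \rhd \beta) \rhd \gamma(w)$ as the sum over all ordered factorisations $w = u_1 v_1 v_2 v_3 u_3$ into five words of positive degree, with summand $\gamma(u_1 u_3)\,\beta(v_1 v_3)\,\alpha(v_2)$: a fully nested configuration, the $\alpha$-block $v_2$ lying inside the $\beta$-block $v_1v_2v_3$, which lies inside the $\gamma$-part, with nonempty material flanking each level on both sides. Then I would unfold $\alpha \rhd (\beta \rhd \gamma)(w)$: one first splits $w = p_1 p_2 p_3$ (positive degrees), then factors the concatenation $p_1 p_3 = q_1 q_2 q_3$ (positive degrees), with summand $\gamma(q_1 q_3)\,\beta(q_2)\,\alpha(p_2)$. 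The crux of the argument is that the middle factor $q_2$, being a contiguous subword of $p_1 p_3$, occupies one of three mutually exclusive positions: entirely within $p_1$ (Case A), entirely within $p_3$ (Case B), or straddling the junction, so that $q_2$ is a nonempty suffix of $p_1$ followed by a nonempty prefix of $p_3$ (Case C).

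A short check of the degree-positivity constraints then finishes the proof. In Case C, writing $p_1 = q_1 r$ and $p_3 = s q_3$ with $r,s$ of positive degree and $q_2 = rs$ turns the summand into $\gamma(q_1 q_3)\,\beta(rs)\,\alpha(p_2)$ summed over $w = q_1 r p_2 s q_3$ with all five pieces of positive degree — precisely the five-word sum obtained above for $(\alpha \rhd \beta) \rhd \gamma(w)$, so Case C cancels that term in the associator. In Case A, writing $p_1 = q_1 q_2 m$ (hence $q_3 = m p_3$, with $m$ of degree $\geq 0$) exhibits the summand as $\gamma(q_1 m p_3)\,\beta(q_2)\,\alpha(p_2)$ with $w = q_1\,q_2\,m\,p_2\,p_3$; Case B is the mirror image, with the roles of $\alpha$ and $\beta$ interchanged. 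Collecting the two, the associator equals
\[
	\sum_{w = X_1 C_1 X_2 C_2 X_3} \bigl(\beta(C_1)\alpha(C_2) + \alpha(C_1)\beta(C_2)\bigr)\,\gamma(X_1 X_2 X_3),
\]
the sum running over factorisations $w = X_1 C_1 X_2 C_2 X_3$ with $X_1, C_1, C_2, X_3$ of positive degree and $X_2$ of degree $\geq 0$. This expression is manifestly invariant under $\alpha \leftrightarrow \beta$, which is \eqref{preLierelation}. The one delicate point is the case analysis for the location of $q_2$ inside $p_1 p_3$ together with the translation of the positivity conditions on $p_1,p_2,p_3,q_1,q_2,q_3$ into the ``nonempty flanks'' description in Cases A and B and into the five-word constraint in Case C; everything else is a routine expansion of \eqref{bridge1}.
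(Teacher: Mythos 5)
Your proof is correct and follows essentially the same route as the paper: expand both iterated products, observe that the terms where the middle ($\beta$- or $\alpha$-) block straddles the deleted piece cancel exactly against $(\alpha \rhd \beta)\rhd\gamma$, and note that the surviving sum over factorisations $w=X_1C_1X_2C_2X_3$ (with $X_2$ possibly empty) is symmetric in $\alpha\leftrightarrow\beta$. Your case analysis and positivity bookkeeping match the paper's treatment, where the roles of your $X_2$ are played by the possibly empty subwords $w_{13}$, $w_{31}$, $w_3'$.
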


\begin{proof} 
Let $w$ be a word in $T_+(A)$. In the following formulas all words and subwords appearing in summations are non-empty except possibly $w_{13}$, $w_{31}$ and $w_3'$. We get:
\begin{align*}
	\alpha \rhd (\beta \rhd \gamma)(w) 
	&=\sum_{w_1w_2w_3=w}\Big( \sum_{w_{11}w_{12}w_{13}=w_1}
	 				\gamma(w_{11}w_{13}w_3)\beta(w_{12})\alpha(w_2)\\
	&+\sum_{w_{11}w_{12}=w_1 \atop w_{32}w_{33}=w_3}
					\gamma(w_{11}w_{33})\beta(w_{12}w_{32})\alpha(w_2)
	+\sum_{w_{31}w_{32}w_{33}=w_3}
					\gamma(w_1w_{31}w_{33})\beta(w_{32})\alpha(w_2)\Big),
\end{align*}
and
$$
	(\alpha \rhd \beta) \rhd \gamma(w)
	=\sum_{w_{11}w_2'w_{33}=w}\
	   \sum_{w_{12}w_2w_{32}=w_2'}\gamma(w_{11}w_{33})\beta(w_{12}w_{32})\alpha(w_2),
$$
so that
$$
	(\alpha \rhd (\beta \rhd \gamma) - (\alpha \rhd \beta) \rhd \gamma)(w)
	=\sum_{w_1w_2w_3'w_4w_5=w}
	\gamma(w_1w_3'w_5)\big(\beta(w_2)\alpha(w_4)+\beta(w_4)\alpha(w_2)\big).
$$
As the expression is symmetric in $\alpha$ and $\beta$, the statement follows.
\end{proof}

\begin{rem}
Recall that pre-Lie algebras are Lie admissible \cite{AG_81,burde_06,manchon_11}, that is, the pre-Lie algebra $\mathfrak{g}$ is a Lie algebra for the commutator bracket $[f,g]:=f \rhd g - g \rhd f.$ The notion of pre-Lie algebra appeared in different place in mathematics, including algebra and geometry, but also in control theory, where preLie algebras are known as chronological algebras \cite{AG_81,burde_06,cartier_11,manchon_11}. 
\end{rem}

The product \eqref{bridge1} may be formulated using irreducible non-crossing partitions. Indeed, we see immediately that
\begin{align}
	\alpha \rhd \beta (a_1\cdots a_n)
		& = - \sum_{\pi \in \mathrm{NC}^{\scriptscriptstyle\mathrm{irr}}_2(n)}
		  	  	\beta(a_{\pi_1})\alpha(a_{\pi_2}). \label{bridge3}
\end{align}	
Using the standard pictorial representation, elements in $\pi=\{\pi_1,\pi_2\} \in \mathrm{NC}^{\scriptscriptstyle\mathrm{irr}}_2(n)$ have the particular form
$$
\begin{tikzpicture}[thick,font=\small]
     \path 	(0,0) 		node (a) {1}
           	(0.5,0.4) 	node (b) {}
           	(1,0) 		node (c) {}
           	(1.5,0) 	node (d) {}
           	(2,0.3) 	node (e) {}
           	(2.5,0) 	node (f)  {}
           	(3,0) 		node (g) {}
           	(3.5,0.4) 	node (h) {}
		(4,0) 		node (i) {$n$};	
     \draw (a) -- +(0,0.75) -| (i);
     \draw (b) node {.\ .\ .} ;
     \draw (c) -- +(0,0.75) -| (c);
     \draw (d) -- +(0,0.60) -| (f);
     \draw(e) node {.\ .\ .}  ;
     \draw (g) -- +(0,0.75) -| (g);
     \draw (h) node {.\ .\ .}  ;
   \end{tikzpicture} 
$$ 
where the outer block $\pi_1$ contains $1$ and $n$.

In the univariate case the explicit computations of pre-Lie products in $\mathfrak{g}$ give:
\begin{align*}
	\alpha \rhd \beta(a^2) &= 0\\
	\alpha \rhd \beta(a^3) 
	&= - \beta(a^2)\alpha(a)\\
		\alpha \rhd \beta(a^4) 
	&= - 2 \beta(a^3)\alpha(a) - \beta(a^2)\alpha(a^2)\\
	\alpha \rhd \beta(a^5) 
	&= - 3 \beta(a^4)\alpha(a) - 2 \beta(a^3)\alpha(a^2) 
	- \beta(a^2)\alpha(a^3)\\
	\alpha \rhd \beta(a^6) 
	&= - 4 \beta(a^5)\alpha(a) - 3 \beta(a^4)\alpha(a^2) - 
	2 \beta(a^3)\alpha(a^3) - \beta(a^2)\alpha(a^4).
\end{align*}
This can be summarised in a closed formula based on the fact that there are $n-l-1$ intervals of length $l$ contained in $\{2,\dots,n-1\}$:

\begin{cor}
\label{cor:sticks}
Let $\alpha,\beta \in \mathfrak{g}$ and $a \in A$. Then:
\begin{equation}
\label{sticks1}
	\alpha \rhd \beta(a^n) 
	= - \sum_{l=1}^{n} (n-l-1) \beta(a^{n-l} )\alpha(a^l).
\end{equation}
\end{cor}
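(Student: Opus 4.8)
The plan is to reduce the pre‑Lie product to a one‑parameter sum by exploiting that all letters of $a^n = a\cdots a$ coincide, starting from the irreducible‑partition description \eqref{bridge3}. By that formula,
\[
	\alpha \rhd \beta(a^n) = - \sum_{\pi=\{\pi_1,\pi_2\} \in \mathrm{NC}^{\scriptscriptstyle\mathrm{irr}}_2(n)} \beta(a_{\pi_1})\,\alpha(a_{\pi_2}),
\]
where $\pi_1$ is the outer block (containing $1$ and $n$) and $\pi_2$ the inner one. First I would recall, as depicted right after \eqref{bridge3}, that the inner block of a two–block irreducible non‑crossing partition cannot contain $1$ or $n$ and, by non‑crossingness, must be an interval; hence the data of such a $\pi$ is exactly the data of an interval $\pi_2$ of some length $l$ contained in $\{2,\dots,n-1\}$, with $\pi_1 = [n]\setminus\pi_2$. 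Since every letter equals $a$, the summand depends only on $l$: one has $\alpha(a_{\pi_2}) = \alpha(a^l)$ and $\beta(a_{\pi_1}) = \beta(a^{n-l})$.

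Next I would group the sum according to $l$ and count. An interval of length $l$ inside the $(n-2)$‑element set $\{2,\dots,n-1\}$ is determined by its minimal element, which can be any of $2,3,\dots,n-l$; this gives exactly $n-l-1$ such intervals, as already noted before the statement. A valid inner block forces $1\le l\le n-2$ (both $\pi_1$ and $\pi_2$ must be nonempty), and for $l=n-1$ the count $n-l-1$ is already $0$, so no term with $l\ge n-1$ contributes. Collecting terms therefore yields
\[
	\alpha \rhd \beta(a^n) = - \sum_{l=1}^{n-2} (n-l-1)\,\beta(a^{n-l})\,\alpha(a^l) = - \sum_{l=1}^{n} (n-l-1)\,\beta(a^{n-l})\,\alpha(a^l),
\]
which is \eqref{sticks1}, the extension of the summation range being harmless by the vanishing of the boundary terms.

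There is no real obstacle here: the corollary is a bookkeeping consequence of \eqref{bridge3} once one identifies a two‑block irreducible non‑crossing partition with an interval inside $\{2,\dots,n-1\}$; the only mild point is the range of $l$ and the vanishing of the $l=n-1,n$ terms. Alternatively one can argue directly from \eqref{bridge1}: a decomposition $w_1w_2w_3 = a^n$ with all $\mathrm{deg}(w_i)>0$ is precisely a pair $(\mathrm{deg}(w_1),\mathrm{deg}(w_2))=(i,l)$ with $i\ge 1$, $l\ge 1$, $i+l\le n-1$, so for fixed $l$ there are $n-l-1$ admissible values of $i$, giving the same count.
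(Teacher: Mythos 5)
Your argument is correct and is exactly the paper's justification: the corollary is stated as an immediate consequence of \eqref{bridge3} (equivalently \eqref{bridge1}) together with the count of $n-l-1$ intervals of length $l$ inside $\{2,\dots,n-1\}$. Your remark on the harmless extension of the summation range to $l=n$ handles the only slightly delicate point, so nothing is missing.
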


Proposition \ref{prop:bridges} points at an interesting phenomenon. For $\alpha_1,\ldots,\alpha_4 \in \mathfrak{g}$, we note that in general $\alpha_1 \rhd \alpha_2 (w)=0$ if $\mathrm{deg}(w)<3$ and $\alpha_1 \rhd (\alpha_2 \rhd \alpha_3) (w)=0$ if $\mathrm{deg}(w)<4$, while $(\alpha_1 \rhd \alpha_2) \rhd \alpha_3 (w)=0$ if $\mathrm{deg}(w)<5$ and $(\alpha_1 \rhd \alpha_2) \rhd (\alpha_3 \rhd \alpha_4) (w)=0$ if $\mathrm{deg}(w)<6$. We may therefore associate an {\emph{effective degree}} to any pre-Lie monomial in $\mathfrak{g}$. It determines the minimal length of words in the support of such monomials. The effective degree for any pre-Lie monomial $P$ of cumulants is denoted $\#(P)$. 

\begin{lemma}\label{effect}
Let $P=P_1 \rhd P_2 $ be a pre-Lie monomial in $\mathfrak{g}$, $P_1,P_2 \in \mathfrak{g}$. Its effective degree computes recursively
\begin{equation}
\label{effdegree}
	\#(P) = \#(P_1) + \max(2,\#(P_2)).
\end{equation}   
\end{lemma}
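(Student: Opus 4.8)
The plan is to work directly from the pre-Lie product \eqref{bridge1}. First I would fix the definition precisely: for a pre-Lie monomial $P$ in $\mathfrak{g}$, the effective degree $\#(P)$ is the smallest integer $d$ such that there exists a word $w$ with $\mathrm{deg}(w)=d$ and $P(w)\neq 0$ (for \emph{generic} arguments, i.e., not identically zero as a function of the cumulants appearing in $P$). The base case is a single generator $\alpha\in\mathfrak{g}$, for which $\#(\alpha)=1$, since such an $\alpha$ can be nonzero on a letter. For the recursion we must show two things: (i) $P=P_1\rhd P_2$ vanishes on every word of length strictly less than $\#(P_1)+\max(2,\#(P_2))$, and (ii) there is a word of exactly that length on which $P$ does not vanish.

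For the upper bound in (i), I would unfold \eqref{bridge1}: $P(w)=-\sum_{w_1w_2w_3=w,\ \mathrm{deg}(w_i)>0} P_2(w_1w_3)P_1(w_2)$, where the sum ranges over decompositions into three nonempty subwords. For any surviving term we need $\mathrm{deg}(w_2)\geq \#(P_1)$ and $\mathrm{deg}(w_1w_3)\geq \#(P_2)$; but also $\mathrm{deg}(w_1),\mathrm{deg}(w_3)\geq 1$, so $\mathrm{deg}(w_1 w_3)\geq 2$, hence $\mathrm{deg}(w_1 w_3)\geq\max(2,\#(P_2))$. Summing, $\mathrm{deg}(w)=\mathrm{deg}(w_1)+\mathrm{deg}(w_2)+\mathrm{deg}(w_3)\geq \#(P_1)+\max(2,\#(P_2))$, which gives the bound. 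Here I would be a little careful: the quantity $\#(P_2)$ is itself defined recursively as a minimal \emph{word length} on which $P_2$ is generically nonzero, so I should note that $P_2(w_1 w_3)$ is a sum over decompositions internal to the monomial $P_2$ and that these only impose $\mathrm{deg}(w_1 w_3)\geq\#(P_2)$, never mixing with the outer splitting — which is exactly why the two constraints $\geq\#(P_1)$ and $\geq\max(2,\#(P_2))$ add without overlap.

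For the lower bound in (ii), I would argue by induction on the structure of the monomial, producing an explicit word (equivalently, an irreducible non-crossing partition) witnessing nonvanishing. By the inductive hypothesis there is a word $u$ of length $\#(P_1)$ with $P_1(u)\neq 0$ and a word $v$ of length $\max(2,\#(P_2))$ with $P_2(v)\neq 0$: if $\#(P_2)\geq 2$ take $v$ from the hypothesis; if $\#(P_2)=1$ (so $P_2$ is a generator), pad to length $2$ using that a cumulant is generically nonzero on a word of length $2$ as well — for cumulants $\kappa,\beta,\rho$ one checks $r_2,b_2,h_2$ are not identically zero. Now insert $u$ into the interior of $v$, i.e., write $v=v'v''$ with both parts nonempty and form $w=v'\,u\,v''$ of length $\#(P_1)+\max(2,\#(P_2))$. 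In the expansion of $P(w)$, the term with $w_1=v'$, $w_2=u$, $w_3=v''$ contributes $-P_2(v)P_1(u)\neq 0$, and the key point — which I would spell out using the fact that decompositions are length-graded and $u$ sits strictly inside — is that no other decomposition can produce a term proportional to this same product of generator-values, so there is no cancellation. Hence $\#(P)=\#(P_1)+\max(2,\#(P_2))$.

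The main obstacle I anticipate is the non-cancellation argument in step (ii): one must ensure the chosen word is not accidentally in the kernel of $P$ because of algebraic relations among the terms. I would handle this by treating the generators appearing in $P$ as formally independent linear functionals (which is legitimate since $\#$ is a property of the monomial as a multilinear expression in arbitrary $\alpha_i$'s, per the sentence preceding the lemma), so that distinct monomials in those functionals are linearly independent and the witnessing term cannot be cancelled. The rest is bookkeeping on word lengths, already essentially carried out in the displayed univariate computations and in the discussion of effective degree preceding the statement.
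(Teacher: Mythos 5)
Your proposal is correct, and it actually supplies more than the paper does: Lemma \ref{effect} is stated there without proof, being treated as immediate bookkeeping from the low-degree observations displayed just before it ($\alpha_1\rhd\alpha_2$ vanishing below length $3$, $(\alpha_1\rhd\alpha_2)\rhd\alpha_3$ below length $5$, etc.). Your two-sided argument is the natural way to make that bookkeeping rigorous: the upper bound is exactly the intended one, since in \eqref{bridge1} all three factors of $w=w_1w_2w_3$ are nonempty, forcing $\deg(w_2)\ge\#(P_1)$ and $\deg(w_1w_3)\ge\max(2,\#(P_2))$, and these constraints add because the inner splittings of $P_2(w_1w_3)$ never interact with the outer one; the attainability is settled by your insertion construction, including the correct treatment of the case $\#(P_2)=1$, where evaluating the generator on a length-two word accounts for the $\max(2,\cdot)$. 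For the non-cancellation step you could simplify: expanding a pre-Lie monomial with $k$ products fully via \eqref{bridge1}, every term is a product of generator values on subwords with coefficient of the same sign $(-1)^k$ (this is the same phenomenon made explicit in Propositions \ref{prop:rightiteration} and \ref{prop:leftiteration2}). Hence, viewing $P(w)$ (for a word with distinct letters) as a polynomial in formally independent functional values, all coefficients share one sign, so the single contributing chain produced by your insertion already guarantees the polynomial is nonzero and therefore generically nonvanishing, without having to argue that no other decomposition reproduces the same monomial. Your reading of $\#$ as a generic (minimal-support) degree is the intended one; for special choices of the functionals only the vanishing bound survives, which is all the paper uses later.
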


\vspace{0.3cm}

We introduce now the fundamental tool for our forthcoming computation of a formula expressing monotone cumulants in terms of free and Boolean cumulants: the (pre-Lie) Magnus expansion. The classical Magnus expansion \cite{magnus_54} is a well-known object in applied mathematics \cite{blanes_09,iserles_00}. In the context of ordinary differential equations, it computes the logarithm of the solution of a linear matrix-valued initial value problem. Its pre-Lie content was first uncovered in \cite{AG_81}. It was then rediscovered and further developed in \cite{ebrahimimanchon_09Mag,ebrahimimanchon_09}. The corresponding endomorphism $\Omega'$ of a pre-Lie algebra satisfying suitable completion properties appeared in the context of enveloping algebras of pre-Lie algebras in \cite{AG_81}, together with its inverse, denoted $W$. The Hopf and group-theoretical properties of $\Omega'$ and its links with the Mielnik--Plebanskii--Strichartz continuous Baker--Campbell--Hausdorff formula have been investigated in \cite{chapat_13,ebrahimipatras_14}. We refer the reader to \cite{Nacer,iserles_02} for more details, including its range of applications and mathematical properties. 

\begin{defi}\cite{chapat_13,ebrahimimanchon_09Mag} (pre-Lie Magnus expansion)\label{def:preLieMagnus}
The pre-Lie Magnus expansion $\Omega' \colon \mathfrak{g} \to \mathfrak{g}$ is defined by
\begin{equation}
\label{preLieMag1}
	\Omega'(\kappa)
	:=\frac{L_{\Omega'(\kappa) \scriptscriptstyle \rhd}}
	{e^{L_{\Omega'(\kappa) \scriptscriptstyle \rhd}} - 1} (\kappa).
\end{equation}
\end{defi}
The first few terms in the expansion \eqref{preLieMag1} are given
\begin{align}
\begin{split}
\label{preLieMag2}
\Omega'(\kappa)
	&= \sum_{n \ge 0}\frac{B_n}{n!} L^{n}_{\Omega'(\kappa) \scriptscriptstyle \rhd}(\kappa)\\
	&=\kappa -\frac{1}{2} \kappa \rhd \kappa + 
	\frac{1}{4} (\kappa \rhd \kappa)\rhd \kappa + \frac{1}{12} \kappa \rhd (\kappa \rhd \kappa)	
	- \frac{1}{8} \big( ( \kappa \rhd \kappa) \rhd \kappa \big) \rhd \kappa \\ 	
	&  
		- \frac{1}{24} \kappa \rhd \big((\kappa \rhd \kappa) \rhd \kappa \big) 
		- \frac{1}{24} (\kappa \rhd \kappa) \rhd (\kappa \rhd  \kappa)
		- \frac{1}{24} \big( \kappa \rhd (\kappa \rhd \kappa)\big) \rhd \kappa   \\
	&
	     	- \frac{1}{720} \kappa \rhd ( \kappa \rhd ( \kappa \rhd ( \kappa \rhd \kappa)))   
		 	+ \cdots  .
\end{split}
\end{align}
The compositional inverse of $\Omega'$ is $W\colon \mathfrak{g} \to \mathfrak{g}$
\begin{align}
	W(\kappa) 
	&:= \frac{e^{L_{\kappa \scriptscriptstyle \rhd}} - 1} {L_{\kappa \scriptscriptstyle \rhd}}(\kappa) 		\label{preLieMaginv1}\\ 
	&=\kappa + \sum_{n>0} \frac{1}{(n+1)!}L^{n}_{\kappa \scriptscriptstyle \rhd}(\kappa)
	= \kappa + \frac{1}{2!} \kappa \rhd \kappa + \frac{1}{3!} \kappa \rhd ( \kappa \rhd \kappa) + \cdots. 	\label{preLieMaginv2}
\end{align}

\begin{rem}
By identifying the binary product $(M \rhd N)(t):= \int_0^t ds \int_0^s du [\dot{M}(s),\dot{N}(u)]$ as a pre-Lie product on time-dependent operators -- seen as elements in a non-unital algebra $A$ of operators, having suitable regularity properties allowing to compute derivatives, integrals, and so on, -- one recovers the classical Magnus expansion \cite{AG_81,ebrahimimanchon_09Mag}. See \cite{Nacer} for a detailed review.
\end{rem}

The pre-Lie Magnus expansion and its inverse permit to express monotone cumulants in terms of Boolean and free cumulants and vice versa. 

\begin{theo}\label{thm:MonFreeBool}
Monotone, free and Boolean cumulants, $\rho, \kappa, \beta \in \mathfrak{g}$ are related in terms of the pre-Lie Magnus expansion \eqref{preLieMag1} 
\begin{equation}
\label{MagnusKey1}
	\rho 	= \Omega'(\kappa) 
		= -\Omega'(-\beta).
\end{equation}
\end{theo}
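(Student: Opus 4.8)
The strategy is to show that the moment--cumulant relations \eqref{mono}, \eqref{free}, \eqref{boolean} can each be encoded as a fixed-point equation in the pro-nilpotent Lie group associated to the pre-Lie algebra $\mathfrak{g}$, and that these three equations are related exactly by the pre-Lie Magnus map $\Omega'$ and the sign change $\kappa \mapsto -\beta$. Concretely, one works in the completion $\hat{\mathfrak{g}} = \prod_{n\ge 1} T_n(A)^\star$ with respect to the length grading (the product $\rhd$ lowers nothing but strictly increases effective degree by Lemma~\ref{effect}, so all the relevant exponential-type series converge). First I would recall that the moment functional $\phi$ relates to the Boolean cumulants $\beta$ by the simplest possible formula: \eqref{boolean} says $\phi$ is the sum over interval partitions of products of $\beta$, which is precisely the statement that $\phi = \beta + \beta \rhd \beta + \beta \rhd(\beta \rhd \beta) + \cdots$ up to the combinatorial factors — more precisely, that the half-shuffle/pre-Lie exponential $W$ governs the passage. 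The key intermediate object is the ``time-one flow'': interpreting \eqref{bridge3}, the product $\alpha \rhd \beta$ counts two-block irreducible non-crossing partitions with $\beta$ on the outer block and $\alpha$ on the inner, and iterating builds up nesting towers of blocks.

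**Key steps.** (1) Establish that the free moment--cumulant relation \eqref{free} is equivalent to $\phi = \sum_{n\ge 0} \frac{1}{(n+1)!} L^n_{\kappa\rhd}(\kappa) + (\text{interval-partition corrections})$; more cleanly, one shows that the exponential of $\mathrm{ad}$-type operator $e^{L_{\rho\rhd}}$ applied suitably reproduces the $\frac{1}{t(\pi)!}$ weights of the monotone relation, since by \eqref{monolab} the weight $1/t(\pi)!$ is exactly $m(\pi)/|\pi|!$, i.e.\ the number of monotone labellings divided by the factorial — which is the combinatorial signature of an exponential/iterated-integral expansion over nested structures. (2) Show that the map sending a cumulant functional to the moment functional $\phi$ is given on one side by $\phi \leftrightarrow \rho$ through the pre-Lie exponential $W$ composed with passage to the group, and on the other side by $\phi \leftrightarrow \kappa$ through a geometric-series (Boolean-type) resummation; the point is that both describe the same $\phi$. (3) Invert: since $W = (\Omega')^{-1}$ by \eqref{preLieMaginv1}, comparing the two descriptions of $\phi$ yields $\rho = \Omega'(\kappa)$. (4) For the Boolean identity $\rho = -\Omega'(-\beta)$, use that the free and Boolean cumulants are related by $\kappa \leftrightarrow \beta$ via the substitution-into-$\phi$ duality (the free and Boolean moment--cumulant relations differ by replacing $\mathrm{NC}(n)$ with $\mathrm{I}(n)$), which at the level of functionals is implemented by the sign-reversed pre-Lie exponential; then chase signs through $\Omega'(-\beta) = \sum \frac{B_n}{n!}(-1)^{?} L^n_{\cdots}(-\beta)$ using that the Bernoulli generating series satisfies $\frac{z}{e^z-1} + \frac{z}{2} = \frac{-z}{e^{-z}-1} - \frac{z}{2}$, i.e.\ $\frac{-z}{e^{-z}-1} = \frac{z}{e^z-1} + z$, which is exactly the symmetry producing the sign flip.

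**The main obstacle.** The hard part will be Step (1)--(2): making precise the claim that the monotone moment--cumulant weights $1/t(\pi)!$ are reproduced by the pre-Lie Magnus/exponential machinery. The cleanest route is Murua's identity (or Lemma~\ref{lem:kreimer} combined with \eqref{monolab}) showing that summing $\frac{1}{t(\pi)!}$ over all ways of inserting one more nested block into a forest corresponds precisely to applying one more $L_{\rho\rhd}$, so that the fixed-point equation $\phi = \frac{e^{L_{\rho\rhd}}-1}{L_{\rho\rhd}}(\rho)$ holds; this is the pre-Lie analogue of ``the time-ordered exponential of $\rho$ is $\phi$''. Once that fixed-point characterization is in hand, the identity $\rho = \Omega'(\kappa)$ is a formal consequence of the definition $\Omega' = \frac{L_{\Omega'(\kappa)\rhd}}{e^{L_{\Omega'(\kappa)\rhd}}-1}(\kappa)$ together with the analogous (easier) fixed-point characterization $\phi = \kappa + \kappa\rhd\phi + \cdots$ coming from \eqref{free}, and the Boolean statement then drops out by the Bernoulli-symmetry sign chase. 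An alternative, and perhaps the intended, approach: invoke directly the results of \cite{ebrahimipatras_18,ebrahimipatras_20} where exactly this pre-Lie relation was established in the shuffle-Hopf-algebra framework, and observe that the pre-Lie product \eqref{bridge1} defined here coincides (after restriction to infinitesimal characters) with the one used there — in which case the proof reduces to checking that identification, and the real content has already been done. I would present the self-contained fixed-point argument as the main proof and remark on the Hopf-algebraic shortcut.
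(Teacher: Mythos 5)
The central step of your self-contained argument does not hold. You characterize the moment functional by the fixed-point equation $\phi = \frac{e^{L_{\rho\rhd}}-1}{L_{\rho\rhd}}(\rho)$, i.e.\ $\phi = W(\rho)$. But $W(\rho)$ is the \emph{free cumulant} functional, not the moment functional: that is exactly \eqref{MagnusKey2}, and it is how Section \ref{sec:Fla1} recovers $\kappa(a_1\cdots a_n)=\sum_{\pi\in\mathrm{NC}^{\scriptscriptstyle\mathrm{irr}}(n)}\frac{(-1)^{|\pi|-1}}{t(\pi)!}\,\rho(a_\pi)$. Concretely, $W(\rho)(a_1a_2)=\rho(a_1a_2)$, whereas $\phi(a_1a_2)=\rho(a_1a_2)+\rho(a_1)\rho(a_2)$. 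The structural reason is that, by \eqref{bridge3}, every iterated $\rhd$-product is supported on \emph{irreducible} non-crossing partitions (the outer block always contains $1$ and $n$), so no series of pre-Lie monomials in $\rho$, $\kappa$ or $\beta$ can equal $\phi$, whose expansions \eqref{mono}--\eqref{boolean} involve non-irreducible partitions as well --- for instance every interval partition with at least two blocks. In particular ``$\phi=\beta+\beta\rhd\beta+\cdots$ up to combinatorial factors'' is not a reformulation of \eqref{boolean}. Hence the two fixed-point descriptions of $\phi$ that you intend to compare cannot even be written down inside $(\mathfrak g,\rhd)$: encoding moment--cumulant relations requires the finer half-shuffle (equivalently, Hopf-convolution) operations, of which $\rhd$ is only a derived bilinear map. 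This is precisely why the paper offers no self-contained proof of Theorem \ref{thm:MonFreeBool}. Your step (4) has a related problem: the Bernoulli identity you quote is correct, but invoking the free--Boolean correspondence (Proposition \ref{prop:freebooleanrel}) at that point would be circular, since in this paper that correspondence is deduced from \eqref{MagnusKey3}, i.e.\ from the theorem itself.

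Your fallback is in fact the paper's actual proof: the theorem is justified by citing the shuffle/Hopf-algebraic results of \cite{ebrahimipatras_15,ebrahimipatras_16,ebrahimipatras_18,ebrahimipatras_19,ebrahimipatras_20}, the pre-Lie product \eqref{bridge1} being the restriction to infinitesimal characters of the one used there; had you led with that, your proof would match the paper's. If you want something closer to self-contained within the present toolkit, the workable direction is purely cumulant--cumulant: evaluate $W(\rho)$ via Corollary \ref{cor:leftiteration} to obtain $\sum_{\pi\in\mathrm{NC}^{\scriptscriptstyle\mathrm{irr}}(n)}\frac{(-1)^{|\pi|-1}}{t(\pi)!}\,\rho(a_\pi)$, identify this with $\kappa$ (and the analogous unsigned sum with $\beta$) using the combinatorial results of \cite{arizmendi_15}, and then apply $\Omega'=W^{-1}$. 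Note, however, that this reverses the logical order of Section \ref{sec:Fla1} and imports the hard combinatorics from \cite{arizmendi_15} rather than proving it.
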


\begin{proof}
The proof of \eqref{MagnusKey1} follows from a group-theoretical result in the context of Hopf algebra and requires shuffle algebra arguments. For details we refer the reader to \cite{ebrahimipatras_15,ebrahimipatras_16,ebrahimipatras_18,ebrahimipatras_19,ebrahimipatras_20}. 
\end{proof}

The above theorem implies that 
\begin{equation}
\label{MagnusKey2}
		\kappa 
		= W(-\Omega'(-\beta))
		\qquad
		\beta  
		= -W(-\Omega'(\kappa)),
\end{equation}
which can be expressed in more compact form
\begin{equation}
\label{MagnusKey3}
	\kappa 
	=e^{-L_{\Omega'(-\beta) \scriptscriptstyle \rhd}}(\beta)
	\qquad
	\beta
	=e^{-L_{\Omega'(\kappa) \scriptscriptstyle \rhd}}(\kappa).
\end{equation}
In \cite{ebrahimipatras_18} it was shown that \eqref{MagnusKey3} permits to describe the relations between free and Boolean cumulants in terms of closed formulas defined via irreducible non-crossing partitions. They were first derived in \cite{BN_08} using M\"obius calculus. See also \cite{arizmendi_15}.

\begin{prop}\label{prop:freebooleanrel}
\begin{eqnarray*}
	{\beta}(a_1\cdots a_n) 
		&=&  \sum\limits_{\pi\in \NC^{\scriptscriptstyle\mathrm{irr}}(n)}
			\prod_{\pi_i \in \pi}\kappa(a_{\pi_i}),\\ 
	{\kappa}(a_1\cdots a_n) 
		&=&  \sum\limits_{\pi\in \NC^{\scriptscriptstyle\mathrm{irr}}(n)} 
			(-1)^{|\pi|-1}\prod_{\pi_i \in \pi}{\beta}(a_{\pi_i}).
\end{eqnarray*}
\end{prop}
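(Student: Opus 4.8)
The plan is to derive Proposition \ref{prop:freebooleanrel} directly from the compact exponential relation \eqref{MagnusKey3}, $\beta = e^{-L_{\Omega'(\kappa) \scriptscriptstyle \rhd}}(\kappa)$, by unravelling the iterated left pre-Lie multiplication operators $L^n_{\kappa \scriptscriptstyle \rhd}$ acting on $\kappa$. Expanding the exponential gives
\[
	\beta = \sum_{n \ge 0} \frac{(-1)^n}{n!} L^n_{\Omega'(\kappa) \scriptscriptstyle \rhd}(\kappa),
\]
so the first task is to understand what a single term $L^n_{\gamma \scriptscriptstyle \rhd}(\kappa)(a_1 \cdots a_n)$ looks like when evaluated on a word. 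From the partition form \eqref{bridge3} of the pre-Lie product, each application of $\gamma \rhd (-)$ inserts a new irreducible ``cap'' block; iterating $n$ times builds up an irreducible non-crossing partition whose outer block corresponds to the innermost $\kappa$, together with $n$ further blocks coming from the copies of $\gamma$. The combinatorial heart of the argument is therefore a lemma identifying $\frac{1}{n!} L^n_{\gamma \scriptscriptstyle \rhd}(\kappa)$ evaluated on $a_1\cdots a_n$ with a sum over irreducible non-crossing partitions $\pi \in \NC^{\scriptscriptstyle\mathrm{irr}}(n)$ weighted by $\kappa$ on the outer block and a product of $\gamma$-values on the remaining blocks, the $\frac{1}{n!}$ absorbing the overcounting from the order in which the $n$ cap blocks were inserted. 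This is exactly the kind of statement that will be made precise and proved in Section \ref{sec:Fla1} (the iterated pre-Lie product computation), so here I would simply invoke it.

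Granting that lemma, the second step is a bootstrapping/induction on $n = \mathrm{deg}(a_1 \cdots a_n)$. Since $\gamma = \Omega'(\kappa)$ is itself defined by a pre-Lie series in $\kappa$, one wants to replace $\gamma$-values on blocks by $\kappa$-values. The cleanest route is to avoid computing $\Omega'$ at all: observe that \eqref{MagnusKey3} is equivalent, via Theorem \ref{thm:MonFreeBool} and the relations \eqref{MagnusKey2}, to the single clean statement $\rho = \Omega'(\kappa)$ together with $\beta = -W(-\rho)$ and $\kappa = W(\rho)$ — but actually the most economical derivation uses only the \emph{first} formula of the proposition as the definition-consistent one and gets the second by Möbius-type inversion within $\NC^{\scriptscriptstyle\mathrm{irr}}(n)$. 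Concretely: define $\widetilde{\beta}(a_1\cdots a_n) := \sum_{\pi \in \NC^{\scriptscriptstyle\mathrm{irr}}(n)} \prod_{\pi_i \in \pi}\kappa(a_{\pi_i})$ and show it satisfies the same recursion that \eqref{MagnusKey3} forces on $\beta$; uniqueness of solutions (the degree-$n$ term only involves lower-degree data, since the pre-Lie product strictly drops effective degree by Lemma \ref{effect}) then gives $\beta = \widetilde{\beta}$. The second identity follows because the two maps $\kappa \mapsto \beta$ and $\beta \mapsto \kappa$ on $\mathfrak{g}$ given by summing over $\NC^{\scriptscriptstyle\mathrm{irr}}(n)$ with signs $(-1)^{|\pi|-1}$ respectively no sign are mutually inverse — this is a standard computation using that the Möbius function of the interval $[0_n, 1_n]$ restricted to irreducible partitions produces exactly the sign $(-1)^{|\pi|-1}$, or alternatively it falls out of $e^{-L}$ and $e^{+L}$ being inverse operators once the first identity is established.

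The main obstacle I anticipate is bookkeeping the combinatorial lemma of the first paragraph: precisely matching the iterated insertions of cap blocks under $L^n_{\gamma \scriptscriptstyle \rhd}$ with irreducible non-crossing partitions, and verifying that the multiplicity with which a given $\pi$ arises is exactly $n!$ divided by nothing extra — i.e., that every linear order in which the non-outer blocks of $\pi$ can be ``peeled off'' from the outside in is counted once, and there are exactly $n!/(\text{something})$... in fact the subtlety is that the blocks of an irreducible non-crossing partition carry a natural partial (nesting) order, and the number of linear extensions is governed by the tree factorial $t(\pi)!$; reconciling this with the flat $1/n!$ coming from the exponential is where the argument has to be done carefully. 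However, since the free–Boolean case here does \emph{not} involve the monotone weights $1/t(\pi)!$, all these subtleties actually cancel and one gets the clean sum over $\NC^{\scriptscriptstyle\mathrm{irr}}(n)$ — but making that cancellation transparent (rather than miraculous) is the real content. Given that \cite{ebrahimipatras_18} already carried this out and the paper states it will be re-derived with the Section \ref{sec:Fla1} machinery, I would present the proof as: cite the iterated-pre-Lie-product formula from Section \ref{sec:Fla1}, specialize $\kappa \rightsquigarrow \kappa$ in $\beta = e^{-L_{\Omega'(\kappa)\scriptscriptstyle\rhd}}(\kappa)$, observe the weights collapse, and close with the sign-inversion on $\NC^{\scriptscriptstyle\mathrm{irr}}(n)$.
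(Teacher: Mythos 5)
There is a genuine gap at the central step of your plan. After expanding $\beta=e^{-L_{\Omega'(\kappa)\scriptscriptstyle\rhd}}(\kappa)$ you posit a lemma saying that $\tfrac{1}{n!}L^{n}_{\gamma\scriptscriptstyle\rhd}(\kappa)$, evaluated on a word, is the sum over irreducible non-crossing partitions with $\kappa$ on the outer block and $\gamma$ on the other blocks, ``the $1/n!$ absorbing the overcounting''. That is not what the iterated product gives: by Proposition \ref{prop:leftiteration2} and Corollary \ref{cor:leftiteration}, the multiplicity with which a given $\pi\in\mathrm{NC}^{\scriptscriptstyle\mathrm{irr}}_{n+1}(m)$ arises is $m(\pi)=|\pi|!/t(\pi)!$, the number of monotone orderings of its blocks, which depends on the nesting shape of $\pi$ and is not $n!$; dividing by $n!$ leaves the weight $(n+1)/t(\pi)!$, and moreover the non-outer blocks carry $\gamma=\Omega'(\kappa)=\rho$, not $\kappa$. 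To reach the clean formula you must therefore re-expand $\rho$ in terms of $\kappa$ (which is precisely Theorem \ref{thm:main}, itself resting on Murua's identity, Proposition \ref{techprop}) and then prove that the total coefficient attached to each $\pi\in\mathrm{NC}^{\scriptscriptstyle\mathrm{irr}}(n)$ collapses to $1$. You explicitly defer this (``making that cancellation transparent \dots is the real content''), so the key cancellation is asserted rather than proved, and it is a nontrivial identity of the same kind the paper spends Section \ref{sec:Fla2} establishing. Note also that the paper itself does not prove Proposition \ref{prop:freebooleanrel} by this route: it quotes it from \cite{BN_08,arizmendi_15,ebrahimipatras_18}, where it is obtained by M\"obius calculus, respectively shuffle/Hopf-algebraic arguments, not by expanding the Magnus exponential term by term.

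The inversion giving the second identity is also not established. The M\"obius function of $\mathrm{NC}(n)$ is not $(-1)^{|\pi|-1}$ on irreducible partitions (for instance $\mu(0_n,1_n)=(-1)^{n-1}C_{n-1}$ with $C_{n-1}$ a Catalan number), so the ``standard computation'' you invoke does not apply as stated; and the alternative remark that it ``falls out of $e^{-L}$ and $e^{+L}$ being inverse operators'' only yields $\kappa=e^{L_{\Omega'(\kappa)\scriptscriptstyle\rhd}}(\beta)$ via \eqref{MagnusKey3}, whose right-hand side still involves $\rho=\Omega'(\kappa)=-\Omega'(-\beta)$ and so requires the same unproved collapse of weights, now with signs. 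A complete elementary argument closer to \cite{BN_08,arizmendi_15} would instead compare the moment-cumulant formulas \eqref{free} and \eqref{boolean}, use the unique factorization of a non-crossing partition into irreducible components supported on the blocks of an interval partition to obtain the first identity by induction on $n$, and then invert the resulting triangular system (or run a sign-reversing involution) for the second; none of these steps appears in your sketch.
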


In the forthcoming sections of the article, we will first invert formulas \eqref{MagnusKey1} to describe (known) results of expressing multivariate free and Boolean cumulants in terms of monotone cumulants using irreducible non-crossing partitions. After that we will address the inverse problem of expressing multivariate monotone cumulants in terms of free and Boolean cumulants using irreducible non-crossing partitions.

\section{Iterated pre-Lie products and rooted trees}
\label{sec:Fla1}

In this section we compute iterated pre-Lie products in $\mathfrak{g}$ and apply the results to describe cumulant-cumulant relations. 

\smallskip

Observe that for a word $w=a_1 \cdots a_m$ of length $\deg(w)=m>4$ and $\alpha,\beta,\kappa \in \mathfrak{g}$, we have
\allowdisplaybreaks
\begin{align}
	(\alpha \rhd \kappa) \rhd \beta (w) 
	&= \sum_{w_1w_2w_3=w \atop \deg(w_i)>0} 
	-\beta(w_1w_3) (\alpha \rhd \kappa)(w_2)\\
	&= \sum_{w_1w_2w_3=w \atop \deg(w_i)>0} 
	   \sum_{w_{2\scriptscriptstyle{1}}w_{{22}}w_{2\scriptscriptstyle{3}}=w_2 \atop \deg(w_{2i})>0} 
	\beta(w_1w_3) \kappa(w_{21}w_{23}) \alpha(w_{22})\\
	&= \sum_{\pi \in \mathrm{NC}^{\scriptscriptstyle\mathrm{irr}}_3(m) \atop {\pi_1\leq\pi_2\leq\pi_3}} 
	\beta(a_{\pi_1})\kappa(a_{\pi_2})\alpha(a_{\pi_3}).
\end{align}
For $\alpha=\beta=\kappa$ this simplifies to
$$
	R^2_{\scriptscriptstyle{\rhd}\kappa }(\kappa)(w)
	=(\kappa \rhd \kappa) \rhd \kappa (w) 
	= \sum_{\pi \in \mathrm{NC}^{\scriptscriptstyle\mathrm{irr}}_3(m) \atop t(\pi) =\ell_3} 
	\prod\limits_{i=1}^3\kappa(a_{\pi_i}),
$$
where we expressed the constraint $\pi_1\leq\pi_2\leq\pi_3$ on the blocks of $\pi \in \mathrm{NC}^{\scriptscriptstyle\mathrm{irr}}_3(m)$ in terms of the corresponding hierarchy tree. More generally, we have that $\pi=\{\pi_1, \ldots,\pi_{n+1}\} \in \mathrm{NC}^{\scriptscriptstyle\mathrm{irr}}_{n+1}(m)$ with $\pi_1\leq \dots\leq\pi_{n+1}$ corresponds to  $t(\pi) =\ell_{n+1}$. The next proposition covers the general case.

\begin{prop}\label{prop:rightiteration}
Let $\kappa_1,\dots,\kappa_{n+1}\in \mathfrak{g}$  and $w =a_1\cdots a_m\in T_+(A)$ a word of length $\deg(w) =m \ge 2n+1$, $n>0$. Then 
\begin{equation}
	(\cdots((\kappa_1\rhd\kappa_2)\rhd \kappa_3)\cdots \kappa_n)\rhd \kappa_{n+1}(w)
	=  \sum_{\pi \in \mathrm{NC}^{\scriptscriptstyle\mathrm{irr}}_{n+1}(m) \atop {t(\pi) =\ell_{n+1}}} 
	(-1)^{n}\kappa_{n+1}(a_{\pi_1})\cdots \kappa_{1}(a_{\pi_{n+1}}).
	\label{obs1}
\end{equation}
In the univariate case this simplifies to
\begin{equation}
	R^{n}_{\scriptscriptstyle{\rhd}\kappa }(\kappa)(w)
	=  \sum_{\pi \in \mathrm{NC}^{\operatorname{irr}}_{n+1}(m) \atop t(\pi) =\ell_{n+1}} 
	(-1)^{n}\prod\limits_{i=1}^{n+1}\kappa(a_{\pi_i}).
	\label{obs1bis}
\end{equation}
\end{prop}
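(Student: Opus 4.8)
I would prove this by induction on $n$, closely following the model computation for $n=2$ already carried out just before the statement. The base case $n=1$ is exactly formula \eqref{bridge3}: $(\kappa_1\rhd\kappa_2)(w) = -\sum_{\pi\in\mathrm{NC}^{\scriptscriptstyle\mathrm{irr}}_2(m)}\kappa_2(a_{\pi_1})\kappa_1(a_{\pi_2})$, and every $\pi\in\mathrm{NC}^{\scriptscriptstyle\mathrm{irr}}_2(m)$ trivially has $t(\pi)=\ell_2$ since the two blocks are nested. For the inductive step, write $Q_n := (\cdots((\kappa_1\rhd\kappa_2)\rhd\kappa_3)\cdots)\rhd\kappa_n$, so that $Q_{n+1} = Q_n \rhd \kappa_{n+1}$. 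Applying the definition \eqref{bridge1} of $\rhd$ gives
\[
	Q_{n+1}(w) = -\sum_{w_1w_2w_3=w,\ \deg(w_i)>0} \kappa_{n+1}(w_1w_3)\, Q_n(w_2),
\]
and then I expand $Q_n(w_2)$ using the induction hypothesis applied to the word $w_2$ (noting $\deg(w_2)\geq 2(n-1)+1$ is forced since $\deg(w)\geq 2n+1$ and $w_1,w_3$ are non-empty).

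**The combinatorial core.** The induction hypothesis expresses $Q_n(w_2)$ as a sum over $\sigma\in\mathrm{NC}^{\scriptscriptstyle\mathrm{irr}}_n(\deg w_2)$ with $t(\sigma)=\ell_n$, i.e. totally nested blocks $\sigma_1\leq\cdots\leq\sigma_n$, weighted by $(-1)^{n-1}\kappa_n(w_{2,\sigma_1})\cdots\kappa_1(w_{2,\sigma_n})$. The pair (a splitting $w=w_1w_2w_3$, an irreducible ladder partition $\sigma$ of the positions of $w_2$) is to be recognized as precisely the data of an irreducible ladder partition $\pi$ of $[m]$ with $n+1$ blocks: the new outer block $\pi_1$ consists of the positions of $w_1$ together with the positions of $w_3$ (it contains $1$ and $m$, so $\pi$ is irreducible), and the remaining blocks are those of $\sigma$ viewed inside the positions of $w_2$, which sit strictly between $\pi_1$'s extremes and hence are all nested below $\pi_1$; since $\sigma$ itself is a ladder, the whole chain $\pi_1\leq\sigma_1\leq\cdots\leq\sigma_n$ is a ladder, i.e. $t(\pi)=\ell_{n+1}$. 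Conversely, any $\pi\in\mathrm{NC}^{\scriptscriptstyle\mathrm{irr}}_{n+1}(m)$ with $t(\pi)=\ell_{n+1}$ has a unique outer block $\pi_1\ni 1,m$ whose complement is a single interval (because all other blocks are nested inside it and form a ladder among themselves, hence lie consecutively), and this recovers $w_1,w_2,w_3$ uniquely. Tracking the sign, $(-1)\cdot(-1)^{n-1} = (-1)^n$, and relabelling ($\kappa_{n+1}$ now reads off the new outer block $\pi_1$, and $\kappa_n,\dots,\kappa_1$ read off $\sigma_1,\dots,\sigma_n = \pi_2,\dots,\pi_{n+1}$) yields exactly the right-hand side of \eqref{obs1}. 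Formula \eqref{obs1bis} is the specialization $\kappa_1=\cdots=\kappa_{n+1}=\kappa$, using $R^n_{\scriptscriptstyle\rhd\kappa}(\kappa) = Q_{n+1}$ with all entries equal.

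**The main obstacle.** The genuine content — and the step I would write most carefully — is the bijection between (splitting $w_1w_2w_3$ of $w$ with non-empty parts) $\times$ ($n$-block irreducible ladder partition of $w_2$) on one side, and ($(n+1)$-block irreducible ladder partition of $[m]$) on the other, together with the verification that the outer block of such a $\pi$ has connected complement. The subtlety is that irreducibility of $\pi$ (both $1$ and $m$ in $\pi_1$) combined with $t(\pi)=\ell_{n+1}$ forces $\pi\setminus\pi_1$ to occupy a single interval: if two nested blocks of a ladder were separated by an element of $\pi_1$, the outer-of-the-two block would not contain that element between two of its own elements unless... — this requires a short argument that the non-crossing + totally-nested condition prevents $\pi_1$ from "interleaving" the inner ladder. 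Once that structural fact is isolated, the rest is bookkeeping of indices and the sign. I expect no analytic difficulty; everything reduces to the definition \eqref{bridge1}, the induction hypothesis, and this one partition-surgery lemma (which is essentially the $w_1w_2w_3$-decomposition already visible in the displayed $n=2$ computation preceding the proposition).
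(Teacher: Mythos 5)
Your proof is correct and takes essentially the same route as the paper's: an induction on $n$ in which the outermost $\rhd$ is expanded via \eqref{bridge1}, the induction hypothesis is applied to the middle factor $w_2$, and the pair (splitting $w=w_1w_2w_3$, inner ladder partition of $w_2$) is identified with an $(n+1)$-block irreducible ladder partition of $[m]$ — the paper writes this out in the univariate case and notes the multivariate case is the same computation, while you additionally spell out the partition-surgery bijection that the paper leaves implicit. One small inaccuracy: $\deg(w_2)\ge 2(n-1)+1$ is \emph{not} forced by $m\ge 2n+1$ (the middle word can be as short as a single letter); however, for such short $w_2$ both $Q_n(w_2)$ and the corresponding ladder sum vanish (by the effective degree, Lemma \ref{effect}, resp.\ because an $n$-block irreducible ladder needs at least $2n-1$ elements), so these terms contribute zero on both sides and your induction goes through unchanged.
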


\begin{proof}
For notational simplicity we give the proof in the univariate case, the general case follows by the same computation.
By induction we see that 
\allowdisplaybreaks
\begin{align*}
	R^{n+1}_{\scriptscriptstyle{\rhd}\kappa }(\kappa)(w)
	&= R^{n}_{\kappa \scriptscriptstyle \rhd} (\kappa) \rhd\kappa (w)\\
	&= \sum_{w_1w_2w_3=w \atop \deg(w_i)>0} 
	-\kappa(w_1w_3) R^{n}_{\kappa \scriptscriptstyle \rhd} (\kappa)(w_2)\\
	&= \sum_{w_1w_2w_3=w \atop \deg(w_i)>0} 
	-\kappa(w_1w_3) 
	\sum_{\pi' \in \mathrm{NC}^{\scriptscriptstyle\mathrm{irr}}_{n+1}(\deg(w_2)) \atop t(\pi') =\ell_{n+1}} 
	(-1)^{n}\prod\limits_{i=1}^{n+1}\kappa(a_{\pi'_i}).\\
	&=\sum_{\pi \in \mathrm{NC}^{\scriptscriptstyle\mathrm{irr}}_{n+2}(m) \atop t(\pi) =\ell_{n+2}} 
	(-1)^{n+1}\prod\limits_{i=1}^{n+2}\kappa(a_{\pi_i}).
\end{align*}
\end{proof}

Let us now turn to left iteration of the pre-Lie product on cumulants. In degree three, we have 
\allowdisplaybreaks
\begin{align}
	\alpha \rhd (\kappa \rhd \beta) (w) 
	&= \sum_{w_1w_2w_3=w \atop \deg(w_i)>0} 
	-(\kappa \rhd \beta)(w_1w_3)\alpha(w_2) \\
	&= \sum_{w_1w_2w_3=w \atop \deg(w_i)>0} 
	   \sum_{w_{11}w_{12}w_{13}=w_1 \atop \deg(w_{11})>0,\deg(w_{12})>0} 
	\beta(w_{11}w_{13}w_3) \kappa(w_{12}) \alpha(w_{2})\\
	& + \sum_{w_1w_2w_3=w \atop \deg(w_i)>0} 
	   \sum_{w_{31}w_{32}w_{33}=w_3 \atop \deg(w_{32})>0,\deg(w_{33})>0} 
	\beta(w_{1}w_{31}w_{33}) \kappa(w_{32}) \alpha(w_{2})\\
	& + \sum_{w_1w_2w_3=w \atop \deg(w_i)>0} 
	   \sum_{w_{11}w_{12}w_{31}w_{32}=w_1w_3 \atop \deg(w_{ij})>0} 
	\beta(w_{11}w_{32}) \kappa(w_{12}w_{31}) \alpha(w_{2})\\
	& = \sum_{\pi \in \mathrm{NC}^{\scriptscriptstyle\mathrm{irr}}_3(m) \atop 
	{t(\pi) =\scalebox{0.5}{\Forest{[[][]]}}}} \beta(w_{\pi_1})(\kappa(w_{\pi_2})
	\alpha(w_{\pi_3})+\kappa(w_{\pi_3})\alpha(w_{\pi_2}))\\
	&+\sum_{\pi \in \mathrm{NC}^{\scriptscriptstyle\mathrm{irr}}_3(m) \atop {t(\pi) =\ell_3}} 
	\beta(a_{\pi_1})\kappa(a_{\pi_2})\alpha(a_{\pi_3}).
\end{align}
Note that for $\pi=\{\pi_1,\pi_2,\pi_3\} \in \mathrm{NC}^{\scriptscriptstyle\mathrm{irr}}_3(m)$ the tree $t(\pi) =\scalebox{0.5}{\Forest{[[][]]}}$ corresponds to the following nesting of blocks, $\pi_1\leq \pi_2,\pi_3$. For $\alpha=\beta=\kappa$ this gives, using the notation \eqref{trickynotation}:
$$
	L^2_{\kappa \scriptscriptstyle{\rhd}}(\kappa)(w)=\kappa \rhd (\kappa \rhd \kappa) (w)
	= \sum_{\pi \in \mathrm{NC}^{\scriptscriptstyle\mathrm{irr}}_3(m) \atop t(\pi) 
	=\scalebox{0.5}{\Forest{[[][]]}}} 2\kappa(a_\pi)
	+\sum_{\pi \in \mathrm{NC}^{\scriptscriptstyle\mathrm{irr}}_3(m) \atop t(\pi) =\ell_3} 
	\kappa(a_\pi).
$$

We compute the general left iterated pre-Lie products in $\mathfrak g$ using irreducible monotone non-crossing partitions. Then we apply this computation to cumulant-cumulant relations.

\begin{prop}\label{prop:leftiteration2}
Let $\kappa_1,\ldots,\kappa_{n+1}\in \mathfrak{g}$ and $w=a_1 \cdots a_m \in T_+(A)$ a word of length $\deg(w) =m \ge n+2$, $n>0$. Then 
\begin{equation}
\label{obs2bis}
	\kappa_1\rhd(\cdots \kappa_{n-1}\rhd(\kappa_n\rhd\kappa_{n+1})\cdots)(w)
	= \sum_{\pi \in \mathcal{M}^{\scriptscriptstyle\mathrm{irr}}_{n+1}(m)} 
	(-1)^{n} \kappa_{n+1}(a_{\pi_1})\cdots\kappa_{1}(a_{\pi_{n+1}}).
\end{equation}
\end{prop}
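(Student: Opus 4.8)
The plan is to induct on $n$, mirroring the recursive structure of the pre-Lie product and of irreducible monotone partitions exhibited in Section \ref{sec:ncpart}. For $n=1$ the claim is exactly $\kappa_1 \rhd \kappa_2(w) = -\sum_{\pi \in \mathrm{NC}^{\scriptscriptstyle\mathrm{irr}}_2(m)} \kappa_2(a_{\pi_1})\kappa_1(a_{\pi_2})$, which is formula \eqref{bridge3}, together with the observation that $\mathcal{M}^{\scriptscriptstyle\mathrm{irr}}_2(m) = \mathrm{NC}^{\scriptscriptstyle\mathrm{irr}}_2(m)$ since a partition with two blocks carries a unique monotone order. For the inductive step I would set $\lambda := \kappa_2 \rhd (\cdots \rhd (\kappa_n \rhd \kappa_{n+1}) \cdots) \in \mathfrak{g}$, so that the left-hand side of \eqref{obs2bis} equals $\kappa_1 \rhd \lambda(w)$. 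Expanding the outer pre-Lie product via \eqref{bridge1} gives
\begin{equation}
\label{eq:planexpand}
	\kappa_1 \rhd \lambda(w) = -\sum_{w_1w_2w_3 = w \atop \deg(w_i)>0} \lambda(w_1w_3)\,\kappa_1(w_2),
\end{equation}
and then I would substitute the inductive hypothesis for $\lambda(w_1 w_3)$, which is a sum over $\kappa \in \mathcal{M}^{\scriptscriptstyle\mathrm{irr}}_n(\deg(w_1w_3))$ of $(-1)^{n-1}\kappa_{n+1}(a_{\kappa_1})\cdots\kappa_2(a_{\kappa_n})$.

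The heart of the argument is then a bijection. Each term in the doubly-indexed sum corresponds to a choice of an interval $\alpha = w_2$ sitting inside $w = a_1 \cdots a_m$ with $1, m \notin \alpha$ (this is forced because $w_1, w_3$ are non-empty), together with an irreducible monotone partition $\kappa$ of the remaining positions $[m] \setminus \alpha \cong \{i_{1,\min}, \dots\}$. By the corollary at the end of Section \ref{sec:ncpart}, such pairs $(\alpha, \kappa)$ are exactly in bijection with non-trivial irreducible monotone partitions $\pi$ of $[m]$ with $|\pi| = n+1$ blocks: the interval $\alpha$ becomes the last block $\pi_{n+1}$ in the monotone order, and the blocks of $\kappa$, with their monotone order shifted up by one, become $\pi_1, \dots, \pi_n$. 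Under this bijection $\kappa_1(w_2) = \kappa_1(a_{\pi_{n+1}})$ and $\kappa_{n+1-j+1}(a_{\kappa_j}) = \kappa_{n+1-j+1}(a_{\pi_j})$, and the sign $(-1)\cdot(-1)^{n-1} = (-1)^n$ comes out correctly; one also checks the degree hypothesis $\deg(w_1w_3) \ge n+1$ is compatible with $m \ge n+2$. This yields precisely the right-hand side of \eqref{obs2bis}.

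The main obstacle I anticipate is bookkeeping the monotone order correctly across the recursion: one must verify that prepending $\pi_{n+1} := \alpha$ as the $\emph{largest}$ block (rather than interleaving it) is consistent with the total order refining the block partial order — i.e. that $\alpha$, being an interval of $[m]$ disjoint from $\{1,m\}$ and hence nested inside the outer block, is indeed comparable to and above every block it needs to be, and free to be placed last. This is where the precise form of the generating process for $\mathcal{M}^{\scriptscriptstyle\mathrm{irr}}_k(n)$ — choosing the innermost/last block $\pi_k$ first as an interval avoiding $1$ and $n$ — does the work, so the argument reduces to citing that corollary and matching indices. A secondary point to handle carefully is that the three-fold splitting $w = w_1 w_2 w_3$ with all parts non-empty, after merging $w_1$ and $w_3$, recovers $\emph{every}$ way of marking $\alpha$ as a proper interior interval exactly once, with no overcounting; this is immediate since $w_1$ and $w_3$ are determined by $\alpha$'s position in $w$.
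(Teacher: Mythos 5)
Your proposal is correct and follows essentially the same route as the paper: induction on $n$, expanding the outermost pre-Lie product as a sum over interior intervals $w_2=\alpha$ not containing $1,m$, applying the inductive hypothesis to $\lambda(w_1w_3)$, and concluding via the bijection from Section \ref{sec:ncpart} between $\mathcal{M}^{\scriptscriptstyle\mathrm{irr}}_{n+1}(m)$ and pairs $(\alpha,\pi')$ with $\pi'\in\mathcal{M}^{\scriptscriptstyle\mathrm{irr}}_{n}([m]\setminus\alpha)$. The sign and index bookkeeping you describe match the paper's argument, and your explicit base case and degree check are harmless additions.
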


\begin{proof}
The proof goes by induction. Given a subset $S$ of $[m]$ with $1,m\in S$, we write $Int^{\scriptscriptstyle\mathrm{irr}}(S)$ for the intervals in $S$ that do neither contain $1$ nor $m$. Then,
\begin{align*}
	&\kappa_1\rhd(\cdots \kappa_{n-1}\rhd(\kappa_n\rhd\kappa_{n+1})\cdots)(w)\\
	&\quad = - \sum_{\pi_{n+1} \in Int^{\scriptscriptstyle\mathrm{irr}}([m])} \kappa_2 \rhd(\cdots \kappa_{n-1}
	\rhd(\kappa_n\rhd\kappa_{n+1})\cdots)(a_{[m]\backslash \pi_{n+1}})\kappa_1(a_{\pi_{n+1}}).
\end{align*}
By the induction hypothesis, 
$$
	\kappa_2\rhd(\cdots \kappa_{n-1}\rhd(\kappa_n\rhd\kappa_{n+1})\cdots)(a_{[m]\backslash\pi_{n+1}})
	= \sum_{\pi' \in \mathcal{M}^{\scriptscriptstyle\mathrm{irr}}_{n}([m]\backslash\pi_{n+1})}
	(-1)^{n-1} \kappa_{n+1}(a_{\pi'_1})\cdots\kappa_{2}(a_{\pi'_{n}}),
$$
where $\mathcal{M}^{\scriptscriptstyle\mathrm{irr}}_{n}([m]\backslash \pi_{n+1})$ stands for the set of monotone irreducible partitions of the set $[m]\backslash\pi_{n+1}$ with $n$ blocks. The proof follows since $\mathcal{M}^{\scriptscriptstyle\mathrm{irr}}_{n+1}(m)$ is in bijection with the pairs $(\pi_{n+1},\pi')$ appearing in the two summations.
\end{proof}

\begin{cor}\label{cor:leftiteration}
Let $\kappa$ and $\rho$ be linear forms on $T_+(A)$ and $w=a_1 \cdots a_m \in T_+(A)$ a word of length $\deg(w) =m \ge n+2$, $n>0$. Then 
\allowdisplaybreaks
\begin{align}
\label{obs2}	
\begin{split}
	L^n_{\rho \scriptscriptstyle{\rhd}}(\kappa)(w)
	&=  \sum_{\pi \in \mathrm{NC}^{\scriptscriptstyle\mathrm{irr}}_{n+1}(m)\atop 1,m\in\pi_1} 
	(-1)^{n} m(\pi)\kappa(w_{\pi_1})\rho(w_{\pi \backslash\pi_1}),
\end{split}
\end{align}
with the notation $\rho(w_{\pi \backslash\pi_1})=\rho(w_{\pi_2})\dots\rho(w_{\pi_{n+1}})$.
\end{cor}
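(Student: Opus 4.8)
The plan is to obtain Corollary \ref{cor:leftiteration} directly as a specialization of Proposition \ref{prop:leftiteration2}, in which all of the pre-Lie factors except the innermost one are taken to be equal. Concretely, I would set $\kappa_1 = \cdots = \kappa_n = \rho$ and $\kappa_{n+1} = \kappa$ in \eqref{obs2bis}. By the definition of the iterated left-multiplication operator the left-hand side becomes $L^n_{\rho \scriptscriptstyle{\rhd}}(\kappa)(w)$, while on the right-hand side the factor $\kappa_{n+1}(a_{\pi_1})\cdots\kappa_1(a_{\pi_{n+1}})$ turns into $\kappa(a_{\pi_1})\rho(a_{\pi_2})\cdots\rho(a_{\pi_{n+1}})$, so that
\[
	L^n_{\rho \scriptscriptstyle{\rhd}}(\kappa)(w)
	= \sum_{\pi \in \mathcal{M}^{\scriptscriptstyle\mathrm{irr}}_{n+1}(m)} (-1)^{n}\,\kappa(a_{\pi_1})\rho(a_{\pi_2})\cdots\rho(a_{\pi_{n+1}}).
\]

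Next I would argue that this sum over irreducible monotone partitions collapses to a weighted sum over irreducible non-crossing partitions. Two observations suffice. First, in any irreducible monotone partition $(\pi_1,\dots,\pi_{n+1}) \in \mathcal{M}^{\scriptscriptstyle\mathrm{irr}}_{n+1}(m)$ the first block $\pi_1$ is uniquely determined by the underlying non-crossing partition: a monotone labeling refines the natural partial order on blocks, and an irreducible partition has exactly one outer block, namely the one containing both $1$ and $m$, so $\pi_1$ must be that outer block --- this is precisely the normalization ``$\pi_1 := S_k$'' in the generating process for monotone partitions recalled in Section \ref{sec:ncpart}. Second, the summand $\kappa(a_{\pi_1})\rho(a_{\pi_2})\cdots\rho(a_{\pi_{n+1}})$ is symmetric under permutations of $\pi_2,\dots,\pi_{n+1}$; hence it depends only on the underlying partition $\pi \in \mathrm{NC}^{\scriptscriptstyle\mathrm{irr}}_{n+1}(m)$ and not on the particular monotone labeling of its blocks. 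Regrouping the sum by underlying non-crossing partition, each $\pi$ then appears with multiplicity equal to the number $m(\pi)$ of its monotone labelings, which equals $|t(\pi)|!/t(\pi)!$ by \eqref{monolab}. This is exactly \eqref{obs2}, with the convention $\rho(a_{\pi\backslash\pi_1}) = \rho(a_{\pi_2})\cdots\rho(a_{\pi_{n+1}})$. No new hypothesis on $\deg(w)$ is needed: the bound $m \ge n+2$ is the one already assumed in Proposition \ref{prop:leftiteration2}, and it is exactly the threshold below which $\mathrm{NC}^{\scriptscriptstyle\mathrm{irr}}_{n+1}(m)$ is empty.

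The argument is short and I expect no real obstacle; the only step that deserves a moment's care is the observation that, after the specialization, the summand no longer depends on the monotone labeling, since this is what converts the sum over $\mathcal{M}^{\scriptscriptstyle\mathrm{irr}}_{n+1}(m)$ into the sum over $\mathrm{NC}^{\scriptscriptstyle\mathrm{irr}}_{n+1}(m)$ weighted by $m(\pi)$. Alternatively one could reprove the statement by a direct induction mirroring that of Proposition \ref{prop:leftiteration2}, peeling off the interval that forms the outer block and invoking \eqref{monolab}, but deducing it from the proposition already in hand is cleaner.
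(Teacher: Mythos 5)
Your proposal is correct and matches the paper's intended derivation: the corollary is stated as an immediate consequence of Proposition \ref{prop:leftiteration2}, obtained exactly as you do by specializing $\kappa_1=\cdots=\kappa_n=\rho$, $\kappa_{n+1}=\kappa$, noting that $\pi_1$ must be the outer block and that the summand is independent of the monotone labeling, and then collecting the $m(\pi)=|t(\pi)|!/t(\pi)!$ labelings per irreducible non-crossing partition via \eqref{monolab}. The paper's remark about an alternative inductive verification using Lemma \ref{lem:kreimer} and the min-max order corresponds to the secondary route you mention at the end.
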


\begin{rem}
Another verification of this corollary via induction can be given using \eqref{Kreimer} in Lemma \ref{lem:kreimer} together with the min-max order on non-crossing partitions mentioned in Remark \ref{rem:minmax}. Looking at a rooted tree as a poset, the coefficients in \eqref{obs2} count the number of total order extension of said poset. Looking at these coefficients from the viewpoint of irreducible non-crossing partitions, they count the number of total orderings of the blocks. 
\end{rem}

Recall now the free-monotone cumulant-cumulant relation \eqref{MagnusKey2}:
$$
	\kappa=W(\rho)=\rho+\sum\limits_{n>1}\frac{1}{(n+1)!}L^n_{\rho \scriptscriptstyle{\rhd}}(\rho).
$$
Applying identity (\ref{obs2}), we get another proof of the
\begin{theo}[\cite{arizmendi_15}]
For elements $a_1, \ldots, a_n \in A$ the multivariate free cumulants can be expressed in terms of multivariate monotone cumulants by:
\begin{equation}
\label{freemono1}
	\kappa(a_1\cdots a_n)
	= \sum_{\pi \in \mathrm{NC}^{\scriptscriptstyle\mathrm{irr}}(n)} 
	\frac{(-1)^{|\pi|-1}}{t(\pi)!} \rho(a_\pi).
\end{equation}
\end{theo}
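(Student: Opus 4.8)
The plan is to combine the expansion of $W$ from \eqref{preLieMaginv2} with the explicit evaluation of left-iterated pre-Lie products in Corollary \ref{cor:leftiteration}. Starting from $\kappa = W(\rho) = \rho + \sum_{n>0} \frac{1}{(n+1)!} L^n_{\rho \scriptscriptstyle{\rhd}}(\rho)$ and evaluating on the word $w = a_1\cdots a_n$, the degree-zero term contributes $\rho(a_1\cdots a_n)$, which corresponds in \eqref{freemono1} to the single one-block irreducible partition $\pi = 1_n$ (for which $|\pi|=1$ and $t(\pi)! = 1$). For $k \geq 1$, the term $\frac{1}{(k+1)!} L^k_{\rho \scriptscriptstyle{\rhd}}(\rho)(a_1\cdots a_n)$, using \eqref{obs2} with $\kappa$ replaced by $\rho$, equals $\frac{1}{(k+1)!}\sum_{\pi} (-1)^k m(\pi)\, \rho(a_\pi)$, where the sum is over $\pi \in \mathrm{NC}^{\scriptscriptstyle\mathrm{irr}}_{k+1}(n)$ with $1,n$ in the outer block $\pi_1$ — but every irreducible non-crossing partition has $1,n$ in the same block, so this is simply all of $\mathrm{NC}^{\scriptscriptstyle\mathrm{irr}}_{k+1}(n)$.

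The next step is the bookkeeping identification. Summing over $k \geq 1$ and reindexing by $|\pi| = k+1$, the coefficient attached to a fixed $\pi \in \mathrm{NC}^{\scriptscriptstyle\mathrm{irr}}(n)$ with $|\pi| = k+1 \geq 2$ is $\frac{(-1)^{|\pi|-1} m(\pi)}{|\pi|!}$. Now invoke \eqref{monolab}: $m(\pi) = \frac{|t(\pi)|!}{t(\pi)!} = \frac{|\pi|!}{t(\pi)!}$, since $|t(\pi)| = |\pi|$. Hence $\frac{m(\pi)}{|\pi|!} = \frac{1}{t(\pi)!}$, and the coefficient becomes exactly $\frac{(-1)^{|\pi|-1}}{t(\pi)!}$, matching \eqref{freemono1}. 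The $k=0$ term is consistent with this formula as well (both $m(1_n)=1$ and $t(1_n)! = 1$), so no separate treatment is strictly needed beyond noting that \eqref{obs2} is stated for $n>0$.

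One should check the degree hypothesis: \eqref{obs2} requires $\deg(w) = n \geq k+2$ for the term $L^k_{\rho \scriptscriptstyle{\rhd}}(\rho)$ to be evaluated by that formula, but for $k+2 > n$ there are no irreducible non-crossing partitions of $[n]$ with $k+1$ blocks anyway (an irreducible partition of $[n]$ has at most $n-1$ blocks), so both sides vanish and the identity persists termwise; the series is therefore actually a finite sum. The main obstacle is not conceptual but notational: one must be careful that the $\rho$ appearing as the "argument" and the $\rho$'s appearing as repeated left multipliers in $L^k_{\rho \scriptscriptstyle{\rhd}}(\rho)$ are correctly matched against the roles of $\kappa$ and $\rho$ in Corollary \ref{cor:leftiteration} (here both are $\rho$, so the formula $\rho(w_{\pi_1})\rho(w_{\pi\backslash\pi_1}) = \rho(a_\pi)$ collapses cleanly), and that the sign $(-1)^k = (-1)^{|\pi|-1}$ is tracked through the reindexing.
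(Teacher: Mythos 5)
Your proposal is correct and follows the same route as the paper: invert the Magnus relation to get $\kappa = W(\rho)$, evaluate the series term by term on a word via Corollary \ref{cor:leftiteration}, and use $m(\pi)=|\pi|!/t(\pi)!$ from \eqref{monolab} to collapse the coefficient $\frac{(-1)^{k}m(\pi)}{(k+1)!}$ to $\frac{(-1)^{|\pi|-1}}{t(\pi)!}$. Your extra remarks (that the series terminates because irreducible partitions of $[n]$ have at most $n-1$ blocks for $n\ge 2$, and that the constraint $1,n\in\pi_1$ is vacuous since both functionals are $\rho$) are correct bookkeeping that the paper leaves implicit.
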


In the Boolean case, where $\beta=-W(-\rho)$, we immediately find by an analogous computation
\begin{equation}
\label{boolmono1}
	\beta(a_1\cdots a_n)
	= \sum_{\pi \in \mathrm{NC}^{\scriptscriptstyle\mathrm{irr}}(n)} 
	\frac{1}{t(\pi)!} \rho(a_\pi).
\end{equation}

\section{Monotone-free cumulant-cumulant relations}
\label{sec:Fla2}

We adapt now  constructions by Murua on rooted trees \cite{murua_07} to non-crossing partitions. The definition of quasi-monotone partitions  below is also inspired independently by similar constructions on quasi-posets and finite topologies related to quasi-symmetric functions \cite{fmp_17}.

A quasi-order is a binary relation which is reflexive and transitive but not necessarily antisymmetric. A quasi-order $\preceq$ is total if two elements are always comparable, i.e., if $x \preceq y$ or $y \preceq x$ for all $x, y$. Two elements $x,y$ are equivalent ($x\equiv y$) for $\preceq$ if and only if $x\preceq y$ and $y\preceq x$. We write $x\prec y$ if $x\preceq y$ and $x$ and $y$ are not equivalent.
Total orders on a finite set $X$ of cardinality $n$ are in bijection with bijections from $X$ to $[n]$: given a total order, choose the unique increasing bijection from $X$ to $[n]$. Similarly, total quasi-orders on $X$ are in bijection with surjective maps from $X$ to $[k]$ with $k\leq n$ equal to the number of equivalence classes in $X$.

\begin{defi}
A quasi-monotone partition of $[n]$ is a non-crossing partition $\pi$ equipped with a total quasi-order $\preceq$ of its blocks compatible with the natural partial order $\leq$ in the sense that $\pi_i<\pi_j$ implies $\pi_i\prec \pi_j$. A quasi-monotone partition is of rank $k$ if $\preceq$ has $k$ equivalence classes. The set of quasi-monotone partitions of $[n]$ of rank $k$ is denoted $\mathrm{QM}_k(n)$.
\end{defi}

\begin{rem}
\label{rem.quasi.colored}
Choosing such a quasi-order amounts to decorating the blocks with elements of the set $[k]$ with $k\leq n$ in such a way that $\pi_i<\pi_j$ implies $f(\pi_i)<f(\pi _j)$, where $f$ stands for the (surjective) decoration map. The case where $f(\pi_i)\leq f(\pi _j)$ is required in the previous condition was considered in \cite{arizmendi_15} under the name of non-decreasing $k$-colored non-crossing partitions. 
\end{rem}

Let now $\pi$ be an irreducible non-crossing partition. We will write $\omega_k(\pi)$ for the number of these total quasi-orders on $\pi$, that is the number of quasi-monotone partitions of rank $k$ associated to the irreducible non-crossing partition $\pi$. As this number depends only on the order $\leq$, it depends only on the tree associated to $\pi$ and we write $\omega_k(t(\pi)):=\omega_k(\pi)$ (see \cite[Def.~12]{murua_07}).
 
We also define, following \cite[Thm.~10]{murua_07}, for $\pi$ an irreducible non-crossing partition of $[n]$,
\begin{equation}
\label{muruaomega}
	\omega(\pi)=\omega(t(\pi)):=\sum\limits_{k=1}^n\frac{(-1)^{k+1}}{k}\omega_k(\pi).
\end{equation}
For a general non-crossing partition $\pi'$ of $[n]$, we extend the previous definition by taking $\omega(\pi')$ to be  the product of the evaluations of $\omega$ on the irreducible components of $\pi'$. That is, $\pi'$ decomposes  as a union of irreducible non-crossing partitions, $\pi'=\rho_1\cup \rho_2\cup \cdots \cup \rho_l$ and  we set:
\begin{equation}
\label{ofla}
	\omega(\pi'):=\prod\limits_{i=1}^l\omega(\rho_i).
\end{equation}

These numbers appear in the analysis of the continuous Baker--Campbell--Hausdorff problem (to compute the logarithm of a flow) in a Hall basis as well as in the context of backward error analysis in numerical analysis \cite{calaque-etal_11,chv_05,chartier-etal_10,murua_07}. Their appearance is therefore natural in the context of the pre-Lie approach to cumulant-cumulant relations.

The first few terms of $\omega$ are listed below
$$
	\omega(\Forest{[]}\!)
		=-\frac{1}{2},\ \ 
	\omega(\scalebox{0.8}{\Forest{[[[]]]}}\!)	
		=\frac{1}{3},\ \ 
	\omega(\scalebox{0.8}{\Forest{[[][]]}}\!)	
		= - \frac{1}{2} + \frac{2}{3} = \frac{1}{6}.			
$$
The following table can be found in \cite{chartier-etal_10}.
\begin{equation}
\label{omegacoefftable1}
\begin{tabular} {c|c|c|c|c|c|c|c|c|c|c|c|c}
	$t$ & \scalebox{0.5}{\Forest{[]}} & \scalebox{0.5}{\Forest{[[]]}} & \scalebox{0.5}{\Forest{[[[]]]}} & \scalebox{0.5}{\Forest{[[][]]}} & \scalebox{0.5}{\Forest{[[[][]]]}} & \scalebox{0.5}{\Forest{[[[[]]]]}} 
	  & \scalebox{0.5}{\Forest{[[][[]]]}} & \scalebox{0.5}{\Forest{[[][][]]}} & \scalebox{0.5}{\Forest{[[][][][]]}} \\[0.3cm]
\hline	
	$\omega$ 	& $1$ & $-\frac{1}{2}$ & $\frac{1}{3}$ & $\frac{1}{6}$ & $- \frac{1}{6}$ & $-\frac{1}{4}$ & $-\frac{1}{12}$ & $0$ & $-\frac{1}{30}$ 
\end{tabular} 
\end{equation}

\begin{equation}
\label{omegacoefftable2}
\begin{tabular} {c|c|c|c|c|c|c|c|c|c|c|c|c}
	$t$ 	& \scalebox{0.5}{\Forest{[[[][][]]]}} & \scalebox{0.5}{\Forest{[[[]][[]]]}} & \scalebox{0.5}{\Forest{[[][[][]]]}} & \scalebox{0.5}{\Forest{[[[[[]]]]]}} & \scalebox{0.5}{\Forest{[[][[[]]]]}} & \scalebox{0.5}{\Forest{[[[[][]]]]}} & \scalebox{0.5}{\Forest{[[[][[]]]]}} 
	& \scalebox{0.5}{\Forest{[[][][[]]]}} \\[0.3cm]
\hline	
	$\omega$ & $\frac{1}{30}$ & $\frac{1}{30}$ & $\frac{1}{60}$ & $\frac{1}{5}$ 	& $\frac{1}{20}$ 	& $\frac{3}{20}$ 	& $\frac{1}{10}$ &
	$\frac{-1}{60}$ 
\end{tabular}
\end{equation}

\vspace{0.3cm}

Before entering cumulant-cumulant relations, let us introduce a last construction. Let $\pi=\{\pi_1,\dots,\pi_k\}$ be a non-crossing partition of $[n]$ and $V=\{\pi_{i_1},\dots,\pi_{i_l}\}$ a subset of $\pi$ including all the minimal elements of $\pi$ for the $\leq$-order. We write $Sub(\pi)$ for the set of all such subsets. The set of the $|V|=l$ blocks belonging to $V$ defines a non-crossing partition of $\bigcup_{j=1}^l\pi_{i_j}$ that we denote $\nu(V)$.

We call $V$-connected components of $\pi$ the sets $S_{\pi_{i_j}},\ j\leq l$, of blocks in $\pi$: 
$$
	S_{\pi_{i_j}}=\{\pi_m,\ m\leq k| \pi_m\geq \pi_{i_j} \ \textrm{and}\ \nexists j'\leq l\ 
	\textrm{with}\ \pi_m\geq \pi_{i_{j'}}>\pi_{i_j}\}.
$$
The sets of blocks $S_{\pi_{i_j}}$ are disjoint and define a partition of $\pi$. By construction, each set $S_{\pi_{i_j}}$ is an irreducible non-crossing partition of the set of integers $X_j:=\bigcup_{\pi_i\in S_{\pi_{i_j}}}\pi_i$ with minimal element $\pi_{i_j}$. We set:
\begin{equation}
\label{truc}
	V(\pi):=\{S_{\pi_{i_1}},\dots,S_{\pi_{i_l}}\},
\end{equation}
that we view as a family of irreducible non-crossing partitions with associated forest 
$$
	f(V(\pi))=t(S_{\pi_{i_1}})\cdots t(S_{\pi_{i_l}}).
$$

Notice, for further use, that there is a bijection between the pairs $(\pi,V)$ as above and the pairs $(\nu,(\alpha_i)_{1\leq i\leq l}))$, where $\nu=(\nu_1,\dots,\nu_l)$ is a non-crossing partition and $\alpha_i$ is an irreducible non-crossing partition of $\nu_i$. The bijection is given by 
\begin{equation}
\label{bij1}
	\nu_j:= X_j,\ \ \alpha_j:=S_{\pi_{i_j}}
\end{equation} 
with inverse 
\begin{equation}\label{bij2}
	\pi:=\bigsqcup_{j=1}^l\alpha_j,\ \ V=\{\alpha_1^1,\dots,\alpha_l^1\},
\end{equation} 
where $\alpha_j^1$ stands for the minimal block in $\alpha_j$.

\begin{rem}
For those readers familiar with the min-max order, the previous analysis can be described as follows. Given $\pi=\{\pi_1,\dots, \pi_k\}$, take a partition $\sigma=\{\sigma_1,\dots,\sigma_l\}$ such that $\sigma\gg \pi$. By definition of the min-max order, for every $\sigma_j\in\sigma$ there is a $\pi_{i_j}\in\pi$ such that $\min(\sigma_j),\max(\sigma_j)\in \pi_{i_j}$, this gives the set $V=\{\pi_{i_1},\dots,\pi_{i_l}\}$ (which is also the partition $\nu(V)$). Note that $V$ clearly contains all outer blocks of $\pi$. Then $V(\pi)=\{ \pi|_{\sigma_1},\dots, \pi|_{\sigma_1}\}$ is the set of irreducible partitions obtained when restricting $\pi$ to the blocks of $\sigma$ (this means that $X_j=\rho_j$ and $S_{i_j}=\pi|_{\sigma_j}$).
\end{rem}

The following Proposition translates \cite[Eq.~(41)]{murua_07} in the language of non-crossing partitions.

\begin{prop}\label{techprop}
Given an irreducible non-crossing partition $\pi=\{\pi_1,\dots,\pi_n\}$ with minimal element $\pi_1$, we write $\pi'$ for the non-crossing partition $\{\pi_2,\dots,\pi_n\}$ and
have:
\begin{equation}
\label{newmuruaomega}
	\omega(\pi)=\sum\limits_{V\in Sub(\pi')}\frac{B_{|V|}}{f(\nu(V))!}\omega(V(\pi')).
\end{equation}
\end{prop}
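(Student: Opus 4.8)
The plan is to unwind the definitions so that both sides of \eqref{newmuruaomega} become sums over the same indexing set, and then match terms using Murua's recursion for the $\omega_k$. Concretely, I would start from the defining formula \eqref{muruaomega}, $\omega(\pi)=\sum_{k\ge 1}\frac{(-1)^{k+1}}{k}\omega_k(\pi)$, and recall that $\omega_k(\pi)$ counts quasi-monotone structures of rank $k$ on $\pi$. Since $\pi$ is irreducible with minimal block $\pi_1$, that block is the unique $\leq$-minimal block, hence in any total quasi-order it is either strictly below everything else or equivalent to some of the other blocks; in either case the restriction of $\preceq$ to $\pi'=\{\pi_2,\dots,\pi_n\}$ is again a total quasi-order compatible with $\leq$ on $\pi'$. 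So I would stratify quasi-monotone structures on $\pi$ according to the equivalence class $C$ of the minimal blocks of $\pi'$ that are placed at the bottom level together (or not) relative to $\pi_1$, and more precisely decompose $\pi'$ via the $V$-connected-component construction of \eqref{truc}: the bottom equivalence class of the induced quasi-order on $\pi'$ selects a set $V\in Sub(\pi')$ (it must contain all $\leq$-minimal blocks of $\pi'$), and what remains is a quasi-monotone structure on the disjoint union of the irreducible pieces $V(\pi')=\{S_{\pi_{i_1}},\dots,S_{\pi_{i_l}}\}$.

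Next I would carry out the counting. Fixing $V$ with $|V|=l$, the number of ways to extend is: (a) a quasi-monotone structure on each $S_{\pi_{i_j}}$ independently, contributing $\prod_j\sum_{k_j}\omega_{k_j}(S_{\pi_{i_j}})(\cdots)$, which after the $\omega$-summation assembles into $\omega(V(\pi'))$ by the multiplicativity \eqref{ofla}; and (b) a combinatorial factor for interleaving the levels of $\pi_1$ and of the $l$ chains of levels sitting over the blocks of $V$. The key identity I expect to need is the standard generating-function fact that, when one sums $\frac{(-1)^{k+1}}{k}$ over the rank $k$ and accounts for the shuffle/interleaving of an extra minimal element against $l$ already-ranked chains, one produces exactly the Bernoulli number $B_{|V|}=B_l$ — this is precisely the mechanism by which $\frac{z}{e^z-1}$ enters Murua's analysis, and the paper has already recalled the Bernoulli generating series in the introduction. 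The factor $\frac{1}{f(\nu(V))!}$ should emerge from the tree-factorial bookkeeping: $f(\nu(V))=t(S_{\pi_{i_1}})\cdots t(S_{\pi_{i_l}})$ read off as the forest over $V$, and $\frac{1}{f(\nu(V))!}=\prod_j\frac{1}{t(S_{\pi_{i_j}})!}$ counts, via \eqref{monolab}, the proportion of monotone refinements — i.e. it records the number of ways the internal monotone orders of the pieces are compatible with a given global level assignment.

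The cleanest route is probably to avoid re-deriving Murua's recursion from scratch and instead invoke \cite[Eq.~(41)]{murua_07} directly, as the Proposition's statement advertises ("translates \cite[Eq.~(41)]{murua_07}"). For that I would only need to verify the dictionary: irreducible non-crossing partition $\pi\leftrightarrow$ rooted tree $t(\pi)$ (established in Section \ref{sec:ncpart}); deleting the minimal block $\pi_1\leftrightarrow$ deleting the root, leaving the forest $t(\pi')$ of subtrees; $Sub(\pi')\leftrightarrow$ the subsets of root-children-closed subtrees in Murua's formula; $\nu(V)\leftrightarrow$ the sub-forest spanned by $V$ with its tree factorial $f(\nu(V))!$; and $V(\pi')\leftrightarrow$ the forest obtained by collapsing each $V$-connected component, with $\omega$ extended multiplicatively exactly as in \eqref{ofla}. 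Under this dictionary \eqref{newmuruaomega} is literally Murua's formula, and the bijection \eqref{bij1}--\eqref{bij2} is what guarantees the index sets match.

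I expect the main obstacle to be the bookkeeping in step (b): making the interleaving argument that converts $\sum_k\frac{(-1)^{k+1}}{k}$ together with the level-shuffling multinomials into $B_l$ fully rigorous, since this is exactly the nontrivial content of Murua's Theorem 10 / Eq. (41). If one is content to cite \cite{murua_07}, the real work reduces to checking that the non-crossing-partition constructions of this section (the $V$-connected components \eqref{truc}, the forest $f(V(\pi))$, and the bijection \eqref{bij1}--\eqref{bij2}) are faithful translations of Murua's rooted-tree constructions — a matching of definitions rather than a new combinatorial identity — and that is the version of the proof I would write.
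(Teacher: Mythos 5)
Your proposal is correct and takes essentially the same route as the paper: the paper gives no independent proof of Proposition \ref{techprop}, presenting it exactly as a translation of \cite[Eq.~(41)]{murua_07} under the dictionary between irreducible non-crossing partitions and their nesting trees (root $\leftrightarrow$ minimal block $\pi_1$, $Sub(\pi')$, $\nu(V)$, $V(\pi')$ and the multiplicative extension \eqref{ofla}), which is precisely the version you settle on. Your additional sketch of a from-scratch proof via stratifying quasi-monotone structures by the bottom equivalence class is a reasonable outline of what Murua's argument amounts to, but it is not needed for, and goes beyond, what the paper actually does.
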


In this proposition, $f(\nu(V))$ is the forest associated to non-crossing partition $\nu(V)$. Recall that the factorial of a forest $f$ is the product of the tree factorials $t_i!$, where $t_i$ runs over the trees in the forest $f$. 

\medskip

We address now the central result of this work, i.e., a closed formula for multivariate monotone-free cumulant-cumulant relation. In \eqref{MagnusKey1} we saw that they are related in terms of the pre-Lie Magnus expansion
\begin{equation}
\label{MagnusRel1}
	\rho = \Omega'(\kappa).
\end{equation}
The objective is to express the evaluation of this formula on a word in terms of a sum over irreducible non-crossing partitions. An analogous approach applies to the monotone-Boolean relation, $\rho = -\Omega'(-\beta)$. 

Using the definition of the pre-Lie Magnus expansion \eqref{preLieMag1}, relation \eqref{MagnusRel1} expands into
\begin{equation}
\label{monofree}
	\rho = \sum_{n \ge 0} \frac{B_n}{n!} L^{n}_{\Omega(\kappa) \scriptscriptstyle \rhd}(\kappa).
\end{equation}
Computing up to order five gives the monotone-free cumulant-cumulant relations gives
\begin{align}
	\rho(a_1) 	
		&= \Omega'(\kappa) (a_1)= \kappa(a_1)	\label{rel1}\\[0.3cm]
	\rho(a_1a_2) 
		&=\Omega'(\kappa) (a_1a_2) 
		= \kappa(a_1a_2)\label{rel2}\\[0.3cm]
	\rho(a_1a_2a_3) 
		&=\Omega'(\kappa) (a_1a_2a_3) 
		= \kappa(a_1a_2a_3) 
				+ \frac{1}{2}\kappa(a_1a_3)\kappa(a_2) 		\label{rel3}\\[0.3cm] 
	 \begin{split}
	 \rho(a_1a_2a_3a_4) 
	 	&=  \Omega'(\kappa) (a_1a_2a_3a_4) 
		= \kappa(a_1a_2a_3a_4) 
			+ \frac{1}{2}\kappa(a_1a_4)\kappa(a_2a_3)\\
		&	+ \frac{1}{2}\kappa(a_1a_3a_4)\kappa(a_2) 
			+ \frac{1}{2}\kappa(a_1a_2a_4)\kappa(a_3)
			+ \frac{1}{6} \kappa(a_1a_4)\kappa(a_2)\kappa(a_3)			
	\label{rel4}
	\end{split}\\
	\begin{split}
	\rho(a_1a_2a_3a_4a_5) 
		&=\Omega'(\kappa) (a_1a_2a_3a_4a_5) \\
		&= \kappa(a_1a_2a_3a_4a_5) 
			+ \frac{1}{2}\kappa(a_1a_5)\kappa(a_2a_3a_4)
			+ \frac{1}{2}\kappa(a_1a_4a_5)\kappa(a_2a_3)\\
		&	+ \frac{1}{2}\kappa(a_1a_2a_5)\kappa(a_3a_4) 
			+ \frac{1}{2}\kappa(a_1a_3a_4a_5)\kappa(a_2)
			+ \frac{1}{2}\kappa(a_1a_2a_4a_5)\kappa(a_3)\\
		&	+ \frac{1}{2}\kappa(a_1a_2a_3a_5)\kappa(a_4)
			+ \frac{1}{6}\kappa(a_1a_4a_5)\kappa(a_2)\kappa(a_3)\\
		&	+ \frac{1}{6}\kappa(a_1a_3a_5)\kappa(a_2)\kappa(a_4) 
			+ \frac{1}{6}\kappa(a_1a_2a_5)\kappa(a_3)\kappa(a_4)\\
		&	 + \frac{1}{6}\kappa(a_1a_5)\kappa(a_2a_3)\kappa(a_4)
			+ \frac{1}{6}\kappa(a_1a_5)\kappa(a_3a_4)\kappa(a_2)
			+ \frac{1}{3} \kappa(a_1a_5)\kappa(a_2a_4)\kappa(a_3).	\label{rel5}
	\end{split}		 
\end{align}
In the relations \eqref{rel1}-\eqref{rel5}, the coefficients happen to depend only on the nesting structure of the corresponding irreducible non-crossing partitions. Moreover, they identify with the corresponding values of $\omega$:
\begin{align}
	\rho(a_1a_2)
	&=\sum_{\pi \in \mathrm{NC}^{\scriptscriptstyle\mathrm{irr}}(2)}(-1)^{|\pi|-1}\omega(t(\pi))\kappa(a_\pi)\\
	\rho(a_1a_2a_3)
	&=\sum_{\pi \in \mathrm{NC}^{\scriptscriptstyle\mathrm{irr}}(3)}(-1)^{|\pi|-1}\omega(t(\pi))\kappa(a_\pi)\\
	\rho(a_1 \cdots a_4)
	&=\sum_{\pi \in \mathrm{NC}^{\scriptscriptstyle\mathrm{irr}}(4)}(-1)^{|\pi|-1}\omega(t(\pi))\kappa(a_\pi)\\
	\rho(a_1 \cdots a_5)
	&=\sum_{\pi \in \mathrm{NC}^{\scriptscriptstyle\mathrm{irr}}(5)}(-1)^{|\pi|-1}\omega(t(\pi))\kappa(a_\pi),
\end{align}
with coefficients given by the first four entries in the table \eqref{omegacoefftable1}. These computations motivate Theorem \ref{thm:main} below.

\begin{rem}
Observe that the effective degree (Lemma \ref{effect}) determines the number of terms that are necessary to compute the cumulant-cumulant relation up to order $n$ in \eqref{monofree}.
\end{rem}

Proposition \ref{prop:bridges} together with eq.~(\ref{obs2}) are central to the proof of Theorem \ref{thm:main}, which addresses the problem of expressing monotone cumulants in terms of free (and Boolean) cumulants.

\begin{theo}\label{thm:main}
\begin{equation}
\label{treeMag3}
	\rho(a_1 \cdots a_m)
	=\sum_{\pi \in \mathrm{NC}^{\scriptscriptstyle\mathrm{irr}}(m)} 
	(-1)^{|\pi|-1}\omega(t(\pi)) \prod_{\pi_i \in \pi}\kappa(a_{\pi_i}).
\end{equation}
\end{theo}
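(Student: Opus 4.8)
The plan is to expand the pre-Lie Magnus formula $\rho = \Omega'(\kappa) = \sum_{n\ge 0} \frac{B_n}{n!} L^n_{\Omega'(\kappa)\rhd}(\kappa)$ and evaluate it on a word $w = a_1\cdots a_m$, using Corollary~\ref{cor:leftiteration} to rewrite each iterated left pre-Lie product in terms of irreducible non-crossing partitions. The subtlety is that $\Omega'(\kappa)$ appears recursively inside itself, so the right-hand side of \eqref{monofree} is not yet a closed expression in $\kappa$; the first task is to set up an induction on the length $m$ of the word. For $m=1,2$ the formula reduces to $\rho = \kappa$ in those degrees (since $\omega$ of the one- and two-vertex ladder is $1$ and the unique partitions are the single block), which handles the base case. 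Assuming \eqref{treeMag3} holds for all words of length $< m$, the effective-degree bound (Lemma~\ref{effect}, together with the remark that $\#(L^n_{\alpha\rhd}(\kappa)) \ge n+2$ when $\#(\alpha)\ge 2$) guarantees that inside $L^n_{\Omega'(\kappa)\rhd}(\kappa)(w)$ only the restrictions of $\Omega'(\kappa)$ to strictly shorter words enter, so the induction hypothesis may be substituted for each occurrence of $\rho$.

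Next I would carry out the bookkeeping. By Corollary~\ref{cor:leftiteration},
\[
	L^n_{\Omega'(\kappa)\rhd}(\kappa)(w)
	= \sum_{\substack{\sigma \in \mathrm{NC}^{\scriptscriptstyle\mathrm{irr}}_{n+1}(m)\\ 1,m\in\sigma_1}}
	(-1)^n m(\sigma)\, \kappa(w_{\sigma_1})\, \Omega'(\kappa)(w_{\sigma_2})\cdots \Omega'(\kappa)(w_{\sigma_{n+1}}).
\]
Now expand each $\Omega'(\kappa)(w_{\sigma_j}) = \rho(w_{\sigma_j})$ using the induction hypothesis over $\mathrm{NC}^{\scriptscriptstyle\mathrm{irr}}(\sigma_j)$ and recombine. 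The data of an outer irreducible block $\sigma_1$ of some $\sigma_2,\dots,\sigma_{n+1}$ above it, each further refined into an irreducible non-crossing partition, is exactly the bijection between pairs $(\pi, V)$ and pairs $(\nu, (\alpha_j))$ recorded in \eqref{bij1}--\eqref{bij2}: here $\sigma = \nu(V)$ with $|V| = n+1$, and the refined pieces are the $V$-connected components $V(\pi)$. Collecting the sign $(-1)^{|\pi|-1}$ and the product $\prod_{\pi_i\in\pi}\kappa(a_{\pi_i})$ out front, the coefficient of a fixed $\pi \in \mathrm{NC}^{\scriptscriptstyle\mathrm{irr}}(m)$ becomes a sum over $V \in Sub(\pi')$ (where $\pi'$ drops the minimal block of $\pi$) of $\frac{B_{|V|}}{|V|!}\, m(\nu(V))\,\omega(V(\pi'))$ times the appropriate product of induction-hypothesis $\omega$-factors. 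Since $m(\nu(V)) = |t(\nu(V))|!/t(\nu(V))!$ by \eqref{monolab} and $|t(\nu(V))| = |V|$, the factor $\frac{B_{|V|}}{|V|!}m(\nu(V))$ collapses to $\frac{B_{|V|}}{t(\nu(V))!} = \frac{B_{|V|}}{f(\nu(V))!}$. This is precisely the summand in Proposition~\ref{techprop}, so the inner sum equals $\omega(\pi)=\omega(t(\pi))$ and the induction closes.

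The main obstacle I anticipate is the combinatorial identification of the two sides of the bijection \eqref{bij1}--\eqref{bij2} under the summation, specifically making sure that (i) every pair of an outer block, a choice of which blocks sit directly above it, and a recursive refinement, corresponds to exactly one $\pi \in \mathrm{NC}^{\scriptscriptstyle\mathrm{irr}}(m)$ with $|\pi|$ blocks, and (ii) the multiplicities $m(\sigma)$ coming from Corollary~\ref{cor:leftiteration} match the forest-factorial normalization in Proposition~\ref{techprop} after one unfolds the recursion — in particular that the $\omega$-factors attached to the $V$-connected components in the induction hypothesis are exactly the ones Proposition~\ref{techprop} multiplies together via \eqref{ofla}. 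A clean way to handle (ii) is to factor $\Omega'(\kappa)$ itself into its restriction to irreducible components, reducing the whole computation to the irreducible case and then invoking Proposition~\ref{techprop} verbatim. A secondary technical point is justifying the interchange of the infinite sum over $n$ in \eqref{monofree} with the finite partition sums; this is harmless because, for fixed $m$, only finitely many $n$ contribute by the effective-degree bound, so the sum over $n$ is actually finite on any given word.
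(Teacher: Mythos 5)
Your proposal is correct and takes essentially the same route as the paper's proof: expand the pre-Lie Magnus series (finitely many terms by effective degree), evaluate each $L^n_{\rho\,\scriptscriptstyle\rhd}(\kappa)$ via Corollary \ref{cor:leftiteration}, substitute the induction hypothesis on the non-root blocks, reorganize the data through the bijection \eqref{bij1}--\eqref{bij2}, and close the coefficient identity with Proposition \ref{techprop}. The only blemish is the indexing slip ``$\sigma=\nu(V)$ with $|V|=n+1$'': since $V\in Sub(\pi')$ omits the root block one has $|V|=n$, which is what your subsequent collapse $\frac{B_{|V|}}{|V|!}\,m(\nu(V))=\frac{B_{|V|}}{f(\nu(V))!}$ in fact uses, and which matches the paper's simplification $\frac{B_n}{n!}\cdot\frac{(n+1)!}{(n+1)\,f(\pi')!}=\frac{B_{|\pi'|}}{f(\pi')!}$.
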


\begin{proof} Notice first that the effective degree $\#(L^{n}_{\Omega(\kappa) \scriptscriptstyle{\rhd}}(\kappa))$ is at least $n+1$. Saying this, we find for a word $w=a_1 \cdots a_m \in T_+(A)$:
\begin{align*}
	\rho(w) 
	&= \Omega'(\kappa)(w) =\sum_{n \ge 0}^{m-1} \frac{B_n}{n!}L^n_{\Omega(\kappa) \scriptscriptstyle{\rhd}}(\kappa)(w) \\
	&= \kappa(w) + B_1 (\rho \rhd \kappa)(w) 
		+ \frac{B_2}{2!}(\rho \rhd (\rho \rhd \kappa))(w) 
		+ \cdots 
		+ \frac{B_{m-1}}{(m-1)!}L^{m-1}_{\Omega(\kappa) \scriptscriptstyle{\rhd}}(\kappa)(w)\\
	&=\sum_{n=0}^{m-1} \frac{B_n}{n!} (-1)^{n}
	    \sum_{\pi\in \mathrm{NC}^{\scriptscriptstyle\mathrm{irr}}_{n+1}(m) \atop 1,m \in \pi_1}
	   \frac{|t(\pi)|!}{t(\pi)!} \kappa(a_{\pi_1})\prod_{i=2}^{n+1} \rho(a_{\pi_i}) \\
	&=\sum_{n=0}^{m-1} \frac{B_n}{n!} (-1)^{n}
	    \sum_{\pi\in \mathrm{NC}^{\scriptscriptstyle\mathrm{irr}}_{n+1}(m) \atop 1,m \in \pi_1}
	   \frac{(n+1)!}{(n+1)f(\pi')!} \kappa(a_{\pi_1})\prod_{i=2}^{n+1} \rho(a_{\pi_i}),
\end{align*}	   
where $\pi'=\pi\backslash\pi_1=\{\pi_2,\pi_3,\dots,\pi_{n+1}\}$ is the non-crossing partition obtained by removing $\pi_1$ from $\pi$. We then obtain
\begin{align*}
\rho(w) 
	&=\sum_{n=0}^{m-1} (-1)^{n}
	\sum_{\pi \in \mathrm{NC}^{\scriptscriptstyle\mathrm{irr}}_{n+1}(m) \atop 1,m \in \pi_1}
	\frac{B_{|\pi'|}}{f(\pi')!}  \kappa(a_{\pi_1})\prod_{i=2}^{n+1} \pai 
	\sum_{\sigma_i \in \mathrm{NC}^{\scriptscriptstyle\mathrm{irr}}(\pi_i)} (-1)^{|\sigma_i|-1}
	\omega(t(\sigma_i))\kappa(a_{\sigma_i})\pad
\end{align*}
and, using the bijection (\ref{bij1},\ref{bij2}),
\begin{align*}
\rho(w)
    &=\sum_{\mu \in \mathrm{NC}^{\scriptscriptstyle\mathrm{irr}}(m)}(-1)^{|\nu|-1}\kappa(a_{\mu})\sum\limits_{V\in Sub(\mu')}
	\frac{B_{|V|}}{f(\nu(V))!} \omega(V(\mu')) 
\end{align*}
so that, by Proposition \ref{techprop}
\begin{align*}
\rho(w)
    &=\sum_{\mu \in \mathrm{NC}^{\scriptscriptstyle\mathrm{irr}}(m)}(-1)^{|\mu|-1}\omega(t(\mu))\kappa(a_{\mu}),
\end{align*}
and the statement follows.
\end{proof}


\end{document}